\newtheorem{thm}{Theorem}[section]
\newtheorem{cor}[thm]{Corollary}
\newtheorem{lem}{Lemma}[section]
\newtheorem{prop}{Proposition}[section]
\theoremstyle{definition}
\theoremstyle{remark}
\newtheorem{rem}{Remark}[section]
\numberwithin{equation}{section}
\definecolor{rot}{rgb}{0.000,0.000,0.000}
\newcommand{\R}{\mathbb{R}}
\newcommand{\N}{\mathbb{N}}
\newcommand{\be}{\begin{eqnarray}}
\newcommand{\ben}{\begin{eqnarray*}}
\newcommand{\en}{\end{eqnarray}}
\newcommand{\enn}{\end{eqnarray*}}
\newcommand{\supp}{\mathop{\rm supp}}
\newcommand{\nrm}[2][]{ \| {#2} \|_{#1}} 
\newcommand*{\Relbarfill@}{\arrowfill@\Relbar\Relbar\Relbar}
\newcommand*{\xeq}[2][]{\ext@arrow 0055\Relbarfill@{#1}{#2}}
\newcommand{\rot}{\textcolor[rgb]{0,0,0} }
\title{Inverse source problems \rot{in an inhomogeneous medium} with a single far-field pattern}
\author{Guanghui Hu\footnotemark[1]\;\; and\; Jingzhi Li\footnotemark[2]}
\date{}
\begin{document}
\maketitle
\renewcommand{\thefootnote}{\fnsymbol{footnote}}
\footnotetext[1]{Beijing Computational Science Research Center (CSRC), Beijing 100193, P. R. China (hu@csrc.ac.cn).}
\footnotetext[2]{Department of Mathematics, Southern University of Science and Technology (SUSTech), Shenzhen 518005, P. R. China (li.jz@sustech.edu.cn).}
\renewcommand{\thefootnote}{\arabic{footnote}}

	\begin{abstract}
	This paper concerns time-harmonic inverse source problems with a single far-field pattern in two dimensions, where the  source term is compactly supported in an \emph{a priori} given inhomogeneous background medium.
  For convex-polygonal source terms, we prove that the source
support together with the zeroth and first order derivatives of the source function at corner
points can be uniquely determined. Further, we prove that an admissible set of source functions (including harmonic functions) having a convex-polygonal support can be uniquely identified by a single far-field pattern. A class of radiating sources is characterized and \rot{the extension} of the radiated field across a corner point is proven impossible. The corner scattering theory leads to a data-driven inversion scheme for imaging an arbitrarily convex-polygonal source support.
\vspace*{0.5cm}

{\bf Key words}: Uniqueness, inverse source problem, Helmholtz equation, single measurement.
\end{abstract}

\section{Introduction and main results}
Consider the radiating of a time-harmonic acoustic source  in an inhomogeneous background medium.  In two dimensions, this can be modeled by the inhomogeneous Helmholtz equation
	\begin{subequations} \label{eq:1-cornscatter2018}
		\begin{numcases}{}
		\displaystyle{ \Delta u + k^2 n(x) u = f \quad \mbox{in} \quad \R^2, }\medskip \label{eq:1a-cornscatter2018} \\
		\displaystyle{ \lim\limits_{r \to +\infty} \sqrt{r} \left( \frac {\partial u} {\partial r} - ik u \right) = 0, \quad r := |x|. }\medskip \label{eq:1b-cornscatter2018}
		\end{numcases}
	\end{subequations}	
In this paper, the refractive index function is supposed to satisfy $n(x)\in C^{0,\alpha'}(\R^2)$ ($0<\alpha'<1$) and
	$n(x)=1$ in  $|x|>R$ for some $R>0$. The number $k>0$ represents the wavenumber of the homogeneous medium in $|x|>R$ and $f\in L^2(\R^2)$ is a source term compactly supported in  $D\subset B_R:=\{x: |x|<R\}$ (see Figure \ref{fig1}).
\begin{figure}[htbp]
  \centering
  \includegraphics[width = 3in]{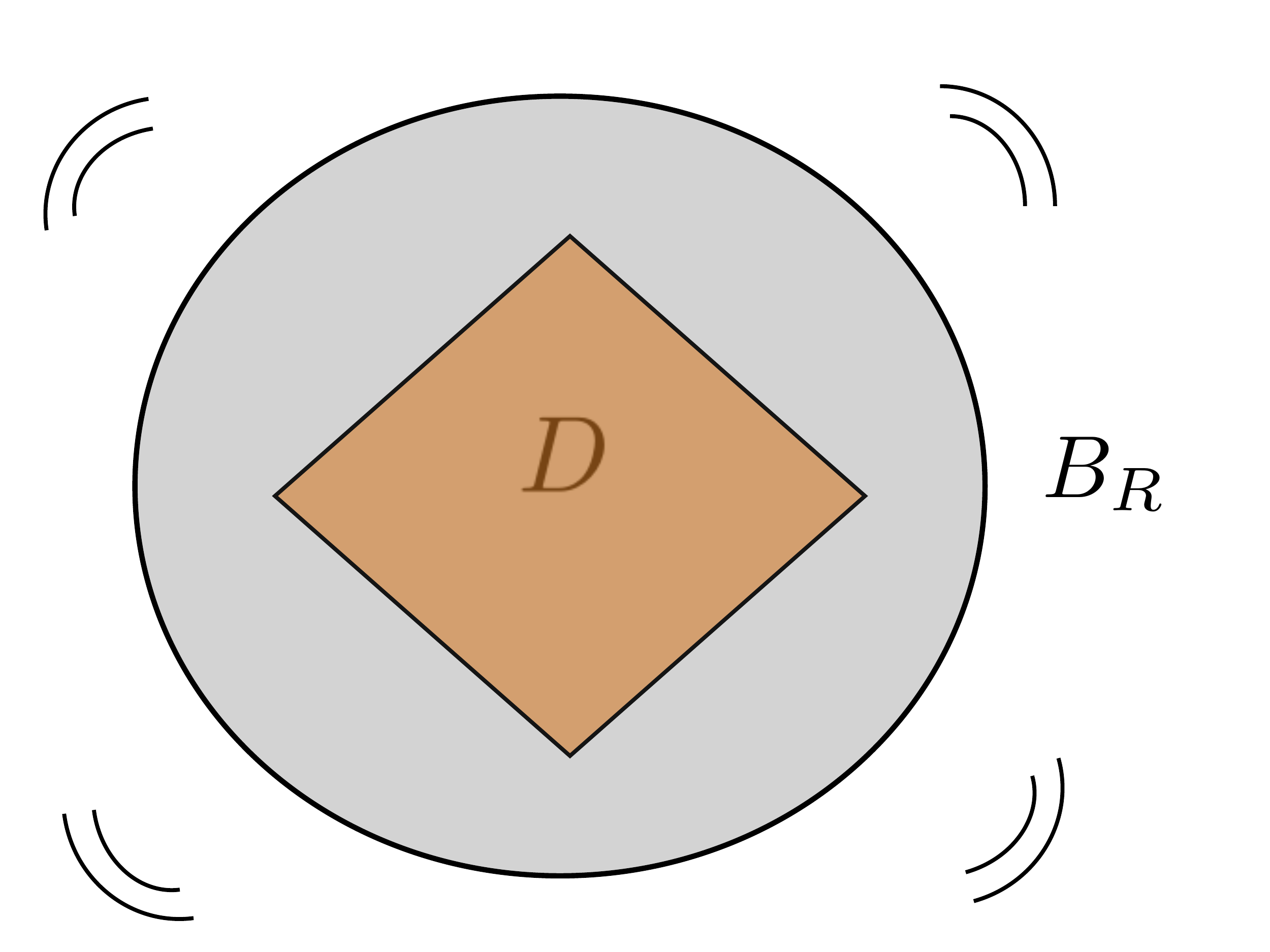}
   \caption{Illustration of the acoustic source problem in an inhomogeneous background medium $B_R$. We assume that the source support  $D\subset B_R$ is a convex polygon and  that the medium in the exterior of $B_R$ is homogeneous.}
      \label{fig1}
    \end{figure}
The  Sommerfeld Radiation Condition \eqref{eq:1b-cornscatter2018}  (which was introduced by Sommerfeld \cite{Sommerfeld} in 1912)
excludes inwardly propagated waves to characterize the outgoing nature of the radiated wave \cite{CK} and thus
guarantees the uniqueness of solutions to the system \eqref{eq:1-cornscatter2018}.
This radiation condition gives arise to the asymptotic expansion of $u$ at the infinity
	\begin{equation} \label{eq:farfield-cornscatter2018}
	u(x) = \frac{e^{\mathrm{i}k |x|}}{\sqrt {|x|}} u^\infty(\hat x) + \mathcal{O} ( |x|^{- 3 / 2} ) \quad \mbox{as} \ \ \ |x| \rightarrow \infty,
	\end{equation}
which is uniform in all directions
$\hat x := x/{|x|} \in \mathbb{S} :=\{ x=(x_1,x_2) \in \R^2:  x_1^2 + x_2^2 = 1 \}$.
The function  $u^\infty(\hat{x})$ is referred to as the far-field pattern (or scattering amplitude). It is an analytic function with respect to the observation direction $\hat{x}\in \mathbb{S}$ (see \cite{CK}). 
	
Using the variational approach, it is easy to prove that the system \eqref{eq:1-cornscatter2018} admits a unique solution in $H^2_{loc}(\R^2)$; see \cite[Chapter 5]{CK} or \cite[Chapter 5]{Cakoni}. Since the far-field pattern encodes information of the source,   we are interested in the inverse problem of recovering the source support $\partial D$ and/or the source term $f$ from the far-field pattern over all observation directions at a fixed frequency.  Throughout the paper we suppose that the refractive index function $n(x)$ is known and $k>0$ is an arbitrarily fixed wave number. Note that the relation between $\partial D$ and $u^\infty$ is non-linear, but the operator mapping $f$ to $u^\infty$ is linear.

It is well known that a single far-field pattern cannot uniquely determine a source function (even its support) in general, due to the existence of non-radiating sources, for instance, $f_0:=(\Delta+k^2n(x))\varphi$ where $\varphi\in C_0^\infty(\R^2)$. \rot{In fact, by linear superposition, it is easy to conclude that $f$ and $f+f_0$ could radiate identical far-field data at infinity.}  It was proved in \cite{EV, BLT} that the far-field data over a range of frequencies can be used to uniquely determine \rot{a spatially-dependent} source term compactly supported in a homogeneous medium. Similar uniqueness results can be proved \rot{if} the background medium is inhomogeneous but known in advance; we refer to  \cite{HKZ2019} for recent uniqueness results for various inverse source problems in the time domain.

In \cite{Ik1999}, Ikehata proposed the enclosure method to recover the convex hull of a polygonal source \rot{in a homogeneous medium using} a single Cauchy pair of near-field data. Uniqueness was also verified as a by-product of the enclosure method, if the source function is H\"older continuous and \rot{non-vanishing near corners}. For source functions without a polygonal support, Kusiak and Sylvester \cite{KS} introduced the concept "scattering support" to define the minimal set that supports the scattered field; see also \cite{Sy2006}. In a recent paper \cite{Bl2018}, the author reveals that the far-field data determine not only the convex-polygonal support but also the source values at corner points. Moreover, it is proved that non-radiating sources must vanish at corner points. Other related studies on corner scattering are devoted to the absence of real non-scattering energies \cite{BLS,PSV,ElHu2015, HSV, ElHu2018acoustic}  and uniqueness with a single measurement data in shape identification arising from inverse conductivity \cite{Ik2000, Seo,FI89} and inverse medium scattering problems \cite{Ik1999,Ik2005, ElHu2015, ElHu2018acoustic, HSV, HLZ, LHY} .

	

This paper has generalized the existing results in the literature in several aspects,  providing new insights into inverse source problems with a single measurement data. The major novelties are summarized as follows. First, we consider the inverse source problem with a single far-field pattern in an inhomogeneous background medium rather than a homogeneous one; Second,
  we prove that the gradient of $C^{1,\alpha}$-smooth source terms at corner points can be uniquely identified, in addition to the information of source values at corners and the convex-polygonal support which were already discussed in \cite{Ik1999,KS,Bl2018}. Moreover, we show that a convex-polygonal source support can be uniquely identified under the more general assumption that the lowest order expansion of the source function is harmonic near corner points (see Corollary \ref{cor3} and the remarks thereafter);  Thirdly, we define an admissible set of source functions (including harmonic functions) which can be uniquely identified by a single far-field pattern.
Our approach relies heavily on the singularity analysis of
the inhomogeneous Laplace equation in a corner domain, which was already used in  \cite{ElHu2018acoustic} for treating inverse medium scattering problems. Our arguments are presented in a two-dimensional setting for simplification, however they can be carried over to curvilinear polyhedral sources in 3D following the lines of \cite[Sections 5]{ElHu2018acoustic}.	
	
	
Below we state the main uniqueness results of this paper. Denote by $B_a(z)=\{x\in \R^2: |x-z|<a\}$ the disk centered at $z\in \R^2$ with radius $a>0$. 
	\begin{thm}[Determination of source support] \label{thm:Holdercontin-cornscatter2018}
	Suppose that
	\begin{itemize}
	\item[(i)] $D := \supp (f) \subset \R^2$ is a convex polygon.
	\item[(ii)] For each corner point $O\in\partial D$,  it holds that
	$f \in C^{1,\alpha}(\overline{D \cap B_\epsilon(O)})$ for some $\epsilon>0$, $0 < \alpha <1$ and that $|f(O)| + |\nabla f(O)| > 0$.
		\end{itemize}
Then the source support  $\partial D$ together with $f$ and $\nabla f$ at each corner point can be uniquely determined by $u^\infty(\hat{x})$ for all $\hat{x}\in\mathbb{S}$.
	\end{thm}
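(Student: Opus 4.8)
I argue by contradiction, reducing the global uniqueness statement to a local vanishing assertion at a corner. Suppose two source configurations $(D^{(1)},f^{(1)})$ and $(D^{(2)},f^{(2)})$ satisfy (i)--(ii) and produce the same far-field pattern $u^\infty$, with radiating solutions $u_1,u_2\in H^2_{loc}(\R^2)$. First I would invoke Rellich's lemma (legitimate since $n\equiv1$ and $f^{(j)}\equiv0$ outside $B_R$) to get $u_1=u_2$ for $|x|>R$, and then unique continuation for $\Delta+k^2n$ (valid as $n\in C^{0,\alpha'}\subset L^\infty$) to conclude that $w:=u_1-u_2$ vanishes throughout the unbounded connected component $G$ of $\R^2\setminus\overline{D^{(1)}\cup D^{(2)}}$, where also $f^{(1)}-f^{(2)}\equiv0$; globally $w\in H^2_{loc}(\R^2)$ solves $\Delta w+k^2nw=f^{(1)}-f^{(2)}$. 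If $\partial D^{(1)}\neq\partial D^{(2)}$, then, since each convex polygon is the convex hull of its vertices, it cannot happen that every vertex of $D^{(1)}$ lies in $\overline{D^{(2)}}$ and conversely; relabelling, there is a vertex $O$ of $D^{(1)}$ with $O\notin\overline{D^{(2)}}$. Shrinking $\epsilon>0$ one gets $B_\epsilon(O)\cap\overline{D^{(2)}}=\emptyset$, $D^{(1)}\cap B_\epsilon(O)=K$ (the open corner sector at $O$, opening angle $\theta_O\in(0,\pi)$), and $B_\epsilon(O)\setminus\overline K\subset G$ (a routine geometric point, using that the complement of a convex polygon is connected). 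Hence, in $B_\epsilon(O)$, $w\in H^2$, $w\equiv0$ off $\overline K$, and $\Delta w+k^2nw=f:=f^{(1)}$ with $f$ of class $C^{1,\alpha}$ up to $O$, $f\equiv0$ off $\overline K$, and $|f(O)|+|\nabla f(O)|>0$. When instead $\partial D^{(1)}=\partial D^{(2)}=:\partial D$, the same local picture holds at each vertex $O$ of $D$ with $f:=f^{(1)}-f^{(2)}$ (now without positivity at $O$). So everything comes down to the local claim: \emph{if $w\in H^2(B_\epsilon(O))$ vanishes off a convex sector $\overline K$ and $\Delta w+k^2nw=f$ with $f\in C^{1,\alpha}(\overline{K\cap B_\epsilon(O)})$ vanishing off $\overline K$, then $f(O)=0$ and $\nabla f(O)=0$.}

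To prove the local claim I would first settle the regularity of $w$ at $O$ (placed at the origin). Sobolev embedding gives $w\in C^{0,\beta}(\overline{B_\epsilon})$ for every $\beta<1$, and $w(O)=0$ because $w\equiv0$ off $\overline K$. Writing $F:=f-k^2nw\in C^{0,\gamma_0}(\overline{K\cap B_\epsilon})$ with $\gamma_0=\min(\alpha',\beta)$, $w$ solves $\Delta w=F$ in $K$ with zero Dirichlet data on both straight edges. Since $\theta_O<\pi$, the Dirichlet Laplacian on $K$ has smallest corner exponent $\pi/\theta_O>1$; elliptic regularity in this convex sector therefore yields $w\in C^{1,\gamma_1}(\overline{K\cap B_{\epsilon'}})$ for some $\epsilon'\in(0,\epsilon)$ and $\gamma_1>0$. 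Continuity of $\nabla w$ up to $O$ together with $\nabla w\equiv0$ off $\overline K$ then forces $\nabla w(O)=0$, so that near $O$ one has $F(x)=f(O)+\nabla f(O)\cdot x+O(|x|^{1+\mu})$ with $\mu=\min(\alpha,\gamma_1)>0$.

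The core of the argument is the corner singularity analysis, which I would carry out by testing against exponentially growing harmonic functions in the spirit of \cite{Bl2018,ElHu2018acoustic}. Take $\rho=s(\mathbf d+i\mathbf d^{\perp})\in\mathbb{C}^2$ with $\rho\cdot\rho=0$, $s\to\infty$, and $\mathbf d$ a unit vector in the open cone of directions with $\mathbf d\cdot x\le -c_{\mathbf d}|x|$ on $K$ (nonempty precisely because $\theta_O<\pi$). Multiplying $\Delta w=F$ by $\chi e^{\rho\cdot x}$, where $\chi\in C_0^\infty(B_\epsilon)$ equals $1$ on $B_{\epsilon/2}$, and integrating by parts twice (using $\Delta e^{\rho\cdot x}=0$, $\rho\cdot\rho=0$, and the vanishing of $\chi,\nabla\chi$ on $\partial B_\epsilon$) gives
\[
\int_{K\cap B_\epsilon} F\,\chi\, e^{\rho\cdot x}\,dx=\int_{B_\epsilon} w\, e^{\rho\cdot x}\bigl(\Delta\chi+2\rho\cdot\nabla\chi\bigr)\,dx .
\]
The right-hand side is $O\!\bigl((1+s)e^{-c_{\mathbf d}s\epsilon/2}\bigr)$ because its integrand is supported in $(B_\epsilon\setminus B_{\epsilon/2})\cap K$. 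On the left I insert the expansion of $F$ and use the elementary sector integrals $\int_K e^{\rho\cdot x}\,dx=s^{-2}C_K(\mathbf d)$ and $\int_K x\,e^{\rho\cdot x}\,dx=-2s^{-3}L_K(\mathbf d)$, the Hölder remainder of $F$ and the cutoff/truncation errors being $O(s^{-(2+\mu)})$ and $O(s^{-(3+\mu)})$ respectively. Multiplying by $s^2$ and letting $s\to\infty$ yields $C_K(\mathbf d)\,f(O)=0$, hence $f(O)=0$ once one knows $C_K(\mathbf d)\neq0$; then the identity is of order $s^{-3}$, and multiplying by $s^3$ and letting $s\to\infty$ gives $L_K(\mathbf d)\cdot\nabla f(O)=0$, hence $\nabla f(O)=0$ after choosing finitely many admissible $\mathbf d$ for which the vectors $L_K(\mathbf d)$ span. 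This proves the local claim. In the case $\partial D^{(1)}\neq\partial D^{(2)}$ this contradicts $|f^{(1)}(O)|+|\nabla f^{(1)}(O)|>0$, so the supports must coincide; applying the claim at each vertex $O$ of $D=D^{(1)}=D^{(2)}$ to $f^{(1)}-f^{(2)}$ then gives $f^{(1)}(O)=f^{(2)}(O)$ and $\nabla f^{(1)}(O)=\nabla f^{(2)}(O)$.

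The routine parts are the Rellich/unique-continuation reduction, the geometric extraction of the corner $O$, and the convex-corner regularity of $w$ (which genuinely uses $\theta_O<\pi$). The hard part is the non-degeneracy of the sector functionals — the existence, for a convex sector, of admissible directions $\mathbf d$ with $C_K(\mathbf d)\neq0$ and of a spanning family $\{L_K(\mathbf d)\}$ — together with the uniform control in $s$ of all error terms; this is exactly where the quantitative corner-scattering estimates of \cite{Bl2018,ElHu2018acoustic} are indispensable, and the whole scheme degenerates as $\theta_O\to\pi$, i.e. when the "corner" ceases to be a genuine corner.
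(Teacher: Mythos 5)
Your reduction (Rellich plus unique continuation, extraction of a corner $O$ of one polygon outside the closure of the other, passage to a local Cauchy problem in the sector) matches the paper's Step 1/Step 2 structure. Where you genuinely diverge is in the local vanishing lemma: the paper proves it (Lemma \ref{lem:pre-cornscatter2018}) by Kondratiev-type singularity analysis in the weighted H\"older scale $\Lambda_\beta^{l,\alpha}$, bootstrapping the decay of $\chi u$ and comparing the Dirichlet and Neumann corner expansions until a biharmonic Cauchy problem forces the harmonic leading term of the source to vanish; you instead test against the harmonic exponentials $e^{\rho\cdot x}$, $\rho\cdot\rho=0$, in the style of \cite{BLS,Bl2018}. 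Your regularity preparation ($w\in C^{1,\gamma_1}$ up to a convex corner, hence $w=O(|x|^{1+\gamma_1})$) and the zeroth-order conclusion $C_K(\mathbf d)f(O)=0$ with $C_K(\mathbf d)\neq 0$ for $\theta_O\in(0,\pi)$ are sound and standard; this part recovers Corollary \ref{cor1}. The CGO route has the virtue of being quantitative and self-contained for order zero, while the paper's route is what scales to arbitrary order $N$ (Corollaries \ref{prop:smooth-cornscatter2018} and \ref{cor3}) and to opening angles in $(0,2\pi)\setminus\{\pi\}$.

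There is, however, a genuine gap exactly at the new content of the theorem, the determination of $\nabla f(O)$. Your plan is to choose ``finitely many admissible $\mathbf d$ for which the vectors $L_K(\mathbf d)$ span.'' As stated this fails: with $\mathbf d=(\cos\phi,\sin\phi)$ and $\mathbf d^{\perp}=(-\sin\phi,\cos\phi)$ one has $(\mathbf d+i\mathbf d^{\perp})\cdot x=e^{-i\phi}(x_1+ix_2)$, so $e^{\rho\cdot x}$ is holomorphic in $z=x_1+ix_2$ and a direct computation in polar coordinates gives
\begin{equation*}
\int_K z\,e^{\rho\cdot x}\,dx=-2s^{-3}e^{3i\phi}\int_{\theta_1}^{\theta_2}e^{-2i\theta}\,d\theta,
\qquad
\int_K \bar z\,e^{\rho\cdot x}\,dx=-2s^{-3}e^{3i\phi}\int_{\theta_1}^{\theta_2}e^{-4i\theta}\,d\theta .
\end{equation*}
The $\phi$-dependence is a common scalar factor, so as $\mathbf d$ varies the functionals $v\mapsto L_K(\mathbf d)\cdot v$ are all proportional and annihilate a one-dimensional (over $\mathbb C$) space of candidate gradients; you only obtain the single relation $\alpha I_2+\beta I_4=0$ for $\nabla f(O)\cdot x=\alpha z+\beta\bar z$. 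You cannot conclude $\nabla f(O)=0$, and the cited reference \cite{Bl2018} does not supply this either, since it treats only the value $f(O)$. The fix is to use both chiralities $\rho_\pm=s(\mathbf d\pm i\mathbf d^{\perp})$: the second choice yields $\alpha\overline{I_4}+\beta\overline{I_2}=0$, and the resulting $2\times 2$ system has determinant $|I_2|^2-|I_4|^2=\sin^4\theta_O\neq 0$ for $\theta_O\in(0,\pi)$. With that amendment (and the routine verification that all remainders are $o(s^{-3})$) your argument closes; without it, the gradient statement --- precisely what goes beyond \cite{Ik1999,KS,Bl2018} --- is not proved.
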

\rot{The proof of Theorem \ref{thm:Holdercontin-cornscatter2018} implies the following weak result which was proved in \cite{Ik1999,KS,Bl2018}}.

\begin{cor}\label{cor1} If the second condition in Theorem \ref{thm:Holdercontin-cornscatter2018} is replaced by
$f \in C^{0,\alpha}(\overline{D \cap B_\epsilon(O)})$ for some $\epsilon>0$, $0 < \alpha<1$ and $|f(O)| > 0$
at the corner point $O\in\partial D$. Then the source support  $\partial D$ and $f(O)$ \rot{at each corner point $O\in\partial D$}  can be uniquely determined by $u^\infty$.
\end{cor}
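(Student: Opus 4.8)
The plan is to show that the proof of Theorem \ref{thm:Holdercontin-cornscatter2018} already yields Corollary \ref{cor1} with essentially no new work, by tracking exactly where the $C^{1,\alpha}$ hypothesis is used and observing that it degrades gracefully to $C^{0,\alpha}$. First I would recall the overall strategy behind the theorem: assume, for contradiction, that two sources $f_1,f_2$ (with convex-polygonal supports $D_1,D_2$, or with a common support $D$) produce the same far-field pattern $u^\infty$. By Rellich's lemma and unique continuation the corresponding radiated fields coincide in the unbounded component of $\R^2\setminus(\overline{D_1}\cup\overline{D_2})$, and hence $w:=u_1-u_2$ solves $\Delta w + k^2 n(x) w = f_1 - f_2$ while vanishing outside $\overline{D_1}\cup\overline{D_2}$. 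One then picks a corner point $O$ that lies on $\partial D_1$ but not in $\overline{D_2}$ (if $D_1\neq D_2$), or any corner $O$ of the common polygon (if $D_1=D_2$), localizes near $O$, and studies the resulting homogeneous problem $\Delta w + k^2 n(x) w = \pm f_j$ in a sectorial neighborhood $D\cap B_\epsilon(O)$ together with the homogeneous Cauchy data of $w$ on the two edges meeting at $O$.

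Next I would invoke the singularity analysis of the inhomogeneous Laplace equation in a corner domain — the same tool the paper cites from \cite{ElHu2018acoustic} — which expands both $w$ and the forcing term in a scale of homogeneous harmonic polynomials plus lower-order remainders. Writing $f$ near $O$ in Taylor form $f(x)=f(O)+\nabla f(O)\cdot(x-O)+O(|x-O|^{1+\alpha})$ in the $C^{1,\alpha}$ case, the leading obstruction is the constant term $f(O)$: the key lemma from the theorem's proof says that a nonzero value $f(O)$ forces a logarithmic (or otherwise non-admissible) singularity of $w$ at $O$, contradicting $w\in H^2_{loc}$; once $f(O)=0$ is forced, the first-order term $\nabla f(O)\cdot(x-O)$ becomes the new leading term and a second application of the same expansion forces $\nabla f(O)=0$, contradicting hypothesis (ii) of the theorem. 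For the Corollary, the hypothesis is only $f\in C^{0,\alpha}$ with $f(O)\neq 0$, so one writes $f(x)=f(O)+O(|x-O|^{\alpha})$; the \emph{first} step of the argument — the one that already needs only Hölder continuity of $f$ and produces a contradiction from $f(O)\neq 0$ — goes through verbatim, and one simply stops there, never needing the $C^{1,\alpha}$ regularity or the gradient step. This simultaneously yields $D_1=D_2$ (so $\partial D$ is determined) and $f_1(O)=f_2(O)$ at every corner.

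Concretely, the steps I would carry out, in order, are: (1) reduce to the Cauchy problem $\Delta w + k^2 n(x)w = g$ near a corner $O$ with $w$ having vanishing Cauchy data on both edges, where $g\in\{f_1,f_2,f_1-f_2\}$ is $C^{0,\alpha}$ near $O$; (2) quote the corner-singularity expansion result (admissible vs. non-admissible homogeneous solutions of the Laplacian in a sector, controlled by the opening angle and the requirement $w\in H^2$) exactly as in the theorem's proof; (3) substitute the Hölder expansion of $g$ and match orders: the constant term $g(O)$ cannot be balanced by any admissible harmonic contribution, so $g(O)=0$; (4) in the setting $D_1\neq D_2$ take $O\in\partial D_1\setminus\overline{D_2}$ so that $g=\pm f_1$ near $O$ and conclude $f_1(O)=0$, contradicting $|f_1(O)|>0$, hence $D_1=D_2=:D$; (5) with the common support, take any corner $O$ of $D$, so $g=f_1-f_2\in C^{0,\alpha}$ near $O$, conclude $(f_1-f_2)(O)=0$, i.e. $f_1(O)=f_2(O)$.

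The main obstacle — the only place where any care is genuinely needed — is to verify that the corner-singularity machinery used in the proof of Theorem \ref{thm:Holdercontin-cornscatter2018} is already set up to handle a merely $C^{0,\alpha}$ forcing term rather than a $C^{1,\alpha}$ one: that is, that the remainder estimates and the "admissibility" bookkeeping are stated with an $\alpha$-Hölder source and that the contradiction is extracted at the zeroth order before any first-order information is invoked. Since the theorem's proof must pass through exactly this zeroth-order step on its way to the gradient step, this is a matter of isolating the relevant sub-argument rather than proving anything new; I expect the Corollary to be, as the paper says, an immediate consequence with a one-line proof pointing back to the first half of the theorem's argument.
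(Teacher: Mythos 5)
Your proposal is correct and follows exactly the paper's route: the paper derives Corollary \ref{cor1} by observing that the key Lemma \ref{lem:pre-cornscatter2018} only requires $h\in C^{0,\alpha}$ with a harmonic lowest-order term, so the zeroth-order step of the proof of Theorem \ref{thm:Holdercontin-cornscatter2018} (applied first with $f_0=f(O)$ to get $D=\tilde D$, then with $f_0=f(O)-\tilde f(O)$ to get $f(O)=\tilde f(O)$) goes through verbatim under the weaker hypothesis, and one simply never invokes the gradient step. Your only loose point is the description of the lemma's mechanism (it is a Dirichlet/Neumann singularity-matching argument rather than a direct contradiction with $w\in H^2_{loc}$), but since you use the lemma as a black box this does not affect the argument.
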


Theorem \ref{thm:Holdercontin-cornscatter2018}, Corollary \ref{cor1} together with Corollary \ref{prop:smooth-cornscatter2018} (see Section \ref{sec:contiSource-cornscatter2018}) state that a source support of convex-polygon type and partial information (derivatives of the zeroth and first orders) of the source function on corner points can be uniquely identified.
In addition to these boundary information,  we prove that the entire source function from an admissible class can be also uniquely determined by a single far-field pattern.

 Given $A(x)=(a_1(x), a_2(x))\in (L^\infty(B_R))^2$ and $b\in L^\infty(B_R)$,  introduce the admissible set
\begin{equation} \label{eq:HSource-cornscatter2018}
S(A,b):=\{v(x): \Delta v(x) + A(x) \cdot \nabla v(x)+ b(x)\, v(x) = 0 \quad \text{in}~ B_R\}.
\end{equation}
In the special case that $A\equiv 0$ and $b\equiv 0$, the set $S(A,b)$ consists of all harmonic functions in $B_R$.
Theorem \ref{thm:analytic-cornscatter2018} below states that a single far-field pattern can be used to determine a source term given by the restriction to a convex polygon of a special function from the admissible set $S(A, b)$.
	\begin{thm}[Determination of source terms] \label{thm:analytic-cornscatter2018}
	Assume that $D=\text{supp}(f)$ is a (non-empty) convex polygon and that $f=v|_{\overline{D}}$  for some $v\in S(A, b)$. Suppose further that $A$ and $b$ are known functions which are analytic near each corner of $\partial D$.
Then the source term $f$ (and also its support $D$) can be uniquely determined by $u^\infty(\hat{x})$ for all $\hat{x} \in \mathbb{S}$.
	\end{thm}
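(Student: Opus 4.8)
The plan is to reduce Theorem~\ref{thm:analytic-cornscatter2018} to Theorem~\ref{thm:Holdercontin-cornscatter2018} by a bootstrap argument at a corner point, exploiting the elliptic equation satisfied by $v$. Suppose two source terms $f_1=v_1|_{\overline{D_1}}$ and $f_2=v_2|_{\overline{D_2}}$, with $v_j\in S(A,b)$, produce the same far-field pattern. By Rellich's lemma and unique continuation (using $n(x)=1$ outside $B_R$ and the well-posedness of \eqref{eq:1-cornscatter2018} in $H^2_{loc}$), the two radiated fields coincide in $\R^2\setminus\overline{D_1\cup D_2}$, so the difference of sources is non-radiating. First I would invoke Theorem~\ref{thm:Holdercontin-cornscatter2018}: since each $v_j$ is real-analytic near the corners of $\partial D_j$ (being a solution of an elliptic equation with analytic coefficients $A,b$ — here I use interior analyticity, and one must check it propagates up to the corner via the assumed analyticity of the coefficients), in particular $f_j\in C^{1,\alpha}$ near corners, the support $\partial D_1=\partial D_2=:\partial D$ is determined, along with the values of $f$ and $\nabla f$ at every corner. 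It remains to upgrade this to determination of $f$ everywhere on $D$.

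The second step is the heart of the matter: I would show that a \emph{non-radiating} source of the form $w:=v_1-v_2$ restricted to $\overline D$, where $w$ satisfies $\Delta w + A\cdot\nabla w + b\,w=0$ in $B_R$, must vanish identically on $D$. Fix a corner $O$ of $\partial D$ with opening angle less than $\pi$. The singularity analysis of the inhomogeneous Laplace equation in a corner domain — the same tool from \cite{ElHu2018acoustic} that underlies the proof of Theorem~\ref{thm:Holdercontin-cornscatter2018} — shows that a non-radiating source forces the source function to vanish to \emph{every} order at $O$: one argues inductively that if $f$ and its derivatives up to order $m$ vanish at $O$, then the leading term in the local expansion of the scattered field near $O$ is governed by the order-$(m+1)$ Taylor polynomial of $f$ at $O$, and the non-scattering condition forces that polynomial to vanish, i.e. all order-$(m+1)$ derivatives of $f$ at $O$ vanish. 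Concretely, writing $f=P_m + (\text{higher order})$ with $P_m$ the degree-$m$ homogeneous Taylor part, and using that $w$ solves an elliptic equation with analytic coefficients, the transmission/corner argument (choosing suitable complex-geometric-optics or Herglotz-type test functions adapted to the corner, as in the cited works) yields $P_m\equiv 0$ for all $m$. Hence every derivative of $w$ at $O$ vanishes.

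The final step is to pass from "all derivatives vanish at one corner" to "$w\equiv 0$ on $D$". Because $w=v_1-v_2$ solves the homogeneous elliptic equation $\Delta w + A\cdot\nabla w + b\,w=0$ in $B_R$ with bounded coefficients, the weak unique continuation property applies: a solution vanishing to infinite order at an interior point of $B_R$ (and $O\in\partial D\subset B_R$ is such a point, after a standard extension/approximation argument, or one works at a boundary corner and uses that $w$ extends as a solution across $\partial D$ into $B_R$) must vanish in the connected component, hence on all of $B_R\supset D$. Therefore $v_1=v_2$ on $D$, so $f_1=f_2$, and since $D_1=D_2$ was already established, the source term is uniquely determined. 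I expect the main obstacle to be the rigorous corner-singularity induction establishing that a non-radiating source of class $S(A,b)$ vanishes to infinite order at a corner: one must carefully track how the first-order term $A\cdot\nabla v$ and the zeroth-order term $b\,v$ interact with the homogeneous expansion of the Laplacian near the corner, ensuring that the analyticity of $A$ and $b$ is genuinely needed and sufficient to close the induction at every order, and that the remainder estimates in the corner domain are uniform enough to invoke the expansion repeatedly. The homogeneous-medium, analytic-coefficient special case (harmonic source functions, i.e. $A\equiv 0$, $b\equiv 0$) should serve as the guiding model and may be presented first.
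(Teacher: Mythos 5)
Your overall strategy (corner-singularity analysis forcing the source to vanish at a corner, then unique continuation for the equation $\Delta w+A\cdot\nabla w+bw=0$ to spread the vanishing over $D$) is the right one and matches the paper's, but there are two concrete gaps. First, you cannot invoke Theorem~\ref{thm:Holdercontin-cornscatter2018} to determine the support: its hypothesis (ii) requires $|f(O)|+|\nabla f(O)|>0$ at every corner, and a general $v\in S(A,b)$ may vanish to high order at a corner (e.g.\ $v=x_1^2-x_2^2$ is harmonic and has $v(O)=\nabla v(O)=0$). Analyticity of $v$ near the corner gives you the smoothness part of the hypothesis but not the non-degeneracy, so Step 1 as written does not go through. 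The paper instead determines the support by the same mechanism as its Step 2: if a corner $O$ of $D$ lies outside $\tilde D$, the Cauchy data of $u-\tilde u$ vanish on the two edges at $O$ and one shows $v\equiv 0$ near $O$, hence (unique continuation) $v\equiv 0$ on $D$, contradicting $D\neq\emptyset$.

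Second, the step you yourself flag as the ``main obstacle'' --- that the non-radiating condition kills the order-$(m+1)$ Taylor part at each stage of an induction --- is exactly where the argument is incomplete, and it cannot be closed as stated: Lemma~\ref{lem:pre-cornscatter2018} only applies to right-hand sides whose leading term has the special harmonic form \eqref{eq:hform1-cornscatter2018}, i.e.\ $r^N(A\cos N\theta+B\sin N\theta)$, whereas a general homogeneous Taylor polynomial of degree $N$ is $r^N F(\theta)$ with $F$ a trigonometric polynomial containing all frequencies $n$ with $n+2m=N$. The missing ingredient is the paper's Corollary~\ref{lem:2-cornscatter2018}: expanding $v=\sum_{j\ge M}r^jF_j(\theta)$ and comparing the coefficient of $r^{M-2}$ in $\Delta v+A\cdot\nabla v+bv=0$ (the terms $A\cdot\nabla v$ and $bv$ only contribute from orders $r^{M-1}$ and $r^M$) forces $F_M''+M^2F_M=0$, so the \emph{lowest} non-vanishing term is automatically harmonic. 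With this in hand no induction is needed: a single application of Lemma~\ref{lem:pre-cornscatter2018} annihilates the lowest term, contradicting its minimality, so $v\equiv 0$ near $O$ by analyticity; applying the same one-shot argument to $v-\tilde v\in S(A,b)$ once $D=\tilde D$ is known gives $f=\tilde f$. Your final unique-continuation step is fine (near $O$ the coefficients are analytic, so infinite-order vanishing at $O$ gives vanishing on a neighborhood, and weak unique continuation with $L^\infty$ coefficients does the rest), but the reference to complex-geometric-optics or Herglotz test functions is a detour --- the mechanism here is Kondratiev-type weighted H\"older regularity in the sector, not CGO solutions.
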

	
	For the multi-index $\beta=(\beta_1, \beta_2)$, we say $\beta\geq 0$ if $\beta_j \geq 0$ for $j=1,2$. If $\beta_j\in \N_0:=\{0, 1,2, \cdots\}$,
we define the differential operator
	$\partial^\beta u:=\partial_{x_1}^{\beta_1} \partial_{x_2}^{\beta_2} u$.
The source term $f$ is called non-radiating if the resulting far-field pattern vanishes identically. Below we describe a class of radiating sources in a  piecewise homogeneous background medium. 	
Here the source support is not necessarily a convex polygon. 
	\begin{cor}[Characterization of radiating sources] \label{thm:NonRadiatingSource-cornscatter2018} Let $n(x)\equiv n_0\neq 1$ in $B_R$. Suppose that the support $D=\text{supp}(f)\neq \emptyset$ contains at least one corner point $O\in \partial D$ and its exterior $\R^2\backslash\overline{D}$ is connected.  Then $f$ is a radiating source under one of the following conditions
		\begin{enumerate}
			\item[(i)] There exists some $l\in \N_0$ and $\alpha\in(0,1)$ such that $f \in C^{l+1,\alpha}(\overline{D \cap B_\epsilon(O)}) \cap W^{l,\infty}(B_\epsilon(O))$, and for some
			multi-index $\beta=(\beta_1, \beta_2)$  ($\beta_j\in \N_0$) with $|\beta| := \beta_1+\beta_2=l$ it holds that
\begin{equation}\label{eq:assumption}
| \partial^\beta f(O)|+|\partial^{\beta'} f(O)|> 0,\quad \beta' -\beta\geq 0,\, |\beta' - \beta| = 1.
\end{equation}
			\item[(ii)] $f=v|_{\overline{D}}$ for some $v \in S(A,b)$ where $A$ and $b$ are analytic functions near $O$.
		\end{enumerate}		
	\end{cor}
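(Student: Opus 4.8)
The plan is to argue by contradiction: suppose $f$ is non-radiating, so $u^\infty \equiv 0$. By Rellich's lemma and unique continuation, the radiated field $u$ vanishes in the unbounded component of $\R^2\setminus\overline{D}$, which by the connectedness hypothesis is all of $\R^2\setminus\overline{D}$. Hence $u$ and $\partial_\nu u$ vanish on $\partial D$ from the exterior; combined with the $H^2_{loc}$ regularity of $u$, the field $u$ has zero Cauchy data on $\partial D$ near the corner point $O$. Translating so that $O$ is the origin, in a neighborhood $B_\epsilon(O)$ the equation reads $\Delta u + k^2 n_0 u = f$ with $u$ supported (together with its Cauchy data) away from the exterior cone — i.e. $u \in H^2$ solves the inhomogeneous Helmholtz equation in the corner $K := D\cap B_\epsilon(O)$ with homogeneous Cauchy data on the two edges meeting at $O$.

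Next I would invoke the singularity analysis of the inhomogeneous Laplace equation in a corner domain, exactly as developed in \cite{ElHu2018acoustic}. Writing the equation as $\Delta u = f - k^2 n_0 u$ and using that $f - k^2 n_0 u$ is, near $O$, as regular as $f$ allows (in case (i), it agrees with $f$ to the relevant order because $u$ vanishes to high order at $O$ — here one uses that the zero Cauchy data forces $u$ and its low-order derivatives to vanish at $O$), one expands both $u$ and the right-hand side in homogeneous polynomials plus logarithmic corrections. The homogeneous Dirichlet-and-Neumann condition on a convex corner is incompatible with a nontrivial lowest-order homogeneous term on the right-hand side: matching the degree-$l$ part of $f$ against the expansion of a solution with vanishing Cauchy data forces $\partial^\beta f(O) = 0$ for all $|\beta| = l$ and likewise the first-order neighbors $\partial^{\beta'}f(O) = 0$, contradicting \eqref{eq:assumption}. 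This is precisely the mechanism by which Theorem \ref{thm:Holdercontin-cornscatter2018} and its generalizations determine the Taylor data at corners; here the same computation run in reverse shows a non-radiating source cannot have the prescribed nonvanishing jet. For case (ii), since $f = v|_{\overline D}$ with $v \in S(A,b)$ and $A,b$ analytic near $O$, elliptic regularity makes $v$ real-analytic near $O$, so $v$ has a convergent Taylor expansion; the same corner argument shows that zero Cauchy data for $u$ forces the entire lowest-order homogeneous part of $v$ to vanish, then inductively all Taylor coefficients of $v$ at $O$ vanish, whence $v \equiv 0$ near $O$ by analyticity and $v \equiv 0$ in $B_R$ by unique continuation for the elliptic operator $\Delta + A\cdot\nabla + b$; thus $f \equiv 0$, contradicting $D \neq \emptyset$.

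The main obstacle will be justifying the precise regularity bookkeeping in step two: one must verify that subtracting $k^2 n_0 u$ does not destroy the homogeneity structure of the leading term of $f$ at the corner, which requires knowing that $u$ vanishes to sufficiently high order at $O$. This vanishing follows from the zero Cauchy data together with the corner geometry — a solution of an elliptic equation with homogeneous Cauchy data on both edges of a convex corner must vanish to order $l+1$ or match the forcing — but making this rigorous involves the same delicate expansion-and-matching argument as in \cite{ElHu2018acoustic}, carried out degree by degree. Once that regularity is in hand, the contradiction with \eqref{eq:assumption} (respectively with $v \not\equiv 0$) is immediate. I would therefore structure the proof as: (1) reduce to zero Cauchy data via Rellich and connectedness; (2) set up the corner expansion and prove the key vanishing lemma by induction on the homogeneity degree, reusing \cite[Section 5]{ElHu2018acoustic}; (3) read off the contradiction in each of the two cases, using analyticity and unique continuation in case (ii).
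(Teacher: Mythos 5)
Your overall architecture is the paper's: assume $u^\infty\equiv 0$, use Rellich's lemma and the connectedness of $\R^2\setminus\overline D$ to get vanishing Cauchy data for $u$ on the two edges meeting at $O$, and then run the corner singularity analysis to contradict the nondegeneracy hypotheses. Your treatment of case (ii) is essentially identical to the paper's (via the analyticity of $v$, the harmonicity of its lowest-order homogeneous part as in Corollary~\ref{lem:2-cornscatter2018}, Lemma~\ref{lem:pre-cornscatter2018}, and unique continuation for $\Delta+A\cdot\nabla+b$), and is fine.

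In case (i), however, there is a genuine gap. You propose to match the degree-$l$ homogeneous Taylor part of $f$ at $O$ directly against the expansion of a solution with zero Cauchy data. But the key corner lemma (Lemma~\ref{lem:pre-cornscatter2018}) requires the lowest-order expansion of the forcing to be \emph{harmonic}, i.e.\ of the form \eqref{eq:hform1-cornscatter2018}; a general degree-$l$ homogeneous polynomial $\sum_{|\beta|=l}\partial^\beta f(O)x^\beta/\beta!$ (for example $\tfrac12|x|^2$ when $l=2$) is not of this form, so the lemma does not apply and your claimed conclusion ``matching forces $\partial^\beta f(O)=0$'' is not justified as stated. You also identify the wrong obstacle: the issue is not whether subtracting $k^2n_0u$ spoils the leading term (that is absorbed by the bootstrap of Lemma~\ref{lem:pre-cornscatter2018}), but that the leading term of $f$ itself need not be harmonic. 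The paper's device is to differentiate the equation: since $f\in W^{l,\infty}(B_\epsilon(O))$ one has $u\in H^{l+2}(B_\epsilon(O))$, and $v:=\partial^\beta u$ with $|\beta|=l$ solves $\Delta v+k^2n_0v=\partial^\beta f$ in $\Sigma_\epsilon$ with $v=\partial_\nu v=0$ on $\Gamma_\epsilon$; now $\partial^\beta f\in C^{1,\alpha}$, so its leading expansion is constant plus linear, hence automatically harmonic, and Corollary~\ref{prop:pre-cornscatter2018} yields $\partial^\beta f(O)=\partial^{\beta'}f(O)=0$, contradicting \eqref{eq:assumption}. Either adopt this reduction (as in the proof of Corollary~\ref{prop:smooth-cornscatter2018}) or prove an extension of Lemma~\ref{lem:pre-cornscatter2018} to arbitrary homogeneous polynomial forcing, which is not available in the paper.
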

	Condition \rot{(\ref{eq:assumption})} has been used  in \cite{ElHu2018acoustic} to describe medium discontinuity in a low contrast case.
	In the special case of $l=0$,
 the first condition in Corollary \ref{thm:NonRadiatingSource-cornscatter2018} is equivalent to the condition (ii) of Theorem \ref{thm:Holdercontin-cornscatter2018} at the corner $O$; see also Remark \ref{rem3.1}.  Corollary \ref{thm:NonRadiatingSource-cornscatter2018} (i) in the special case $l=0$ implies that the zeroth and first order derivatives of
 a non-radiating source must vanish at the corner point (cf. Theorem \ref{thm:Holdercontin-cornscatter2018}).

As a corollary of the proofs of Theorems \ref{thm:Holdercontin-cornscatter2018} and \ref{thm:analytic-cornscatter2018}, we claim that the radiated field must be "singular" (that is, non-analytic) at corner points; see Corollary \ref{cor4} below. This excludes the possibility of analytical extension in a corner domain for solutions of forward source problems, which is important in designing inversion algorithms with a single measurement data; see e.g. the enclosure method \cite{Ik1999}, the range test approach  \cite{KPS, KS} as well as \cite[Chapter 5]{NP2013} and \cite{EH2019}.
\begin{cor}[Absence of extension at corner points]\label{cor4} Let $n(x)\equiv n_0\neq 1$ in $B_R$. Suppose that $O\in \partial D$ is a corner point lying on the boundary of $D=\text{supp}(f)\neq \emptyset$  and one of the conditions (i)-(ii) in Corollary \ref{thm:NonRadiatingSource-cornscatter2018} holds at $O$. Then $u$ cannot be analytically extended from $B_R\backslash\overline{D}$ to the interior of $D$ across the corner point $O$. In other words, $u$ cannot be analytic at $O$.
\end{cor}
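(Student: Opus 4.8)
The plan is to argue by contradiction: a hypothetical analytic extension of $u$ across the corner $O$ will force the source to "disappear" near $O$, through a Cauchy problem for the inhomogeneous Helmholtz equation in the corner domain — precisely the configuration that the proofs of Theorems~\ref{thm:Holdercontin-cornscatter2018} and~\ref{thm:analytic-cornscatter2018} show to be incompatible with conditions (i) and (ii) of Corollary~\ref{thm:NonRadiatingSource-cornscatter2018}.

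Suppose, for contradiction, that $u|_{B_R\setminus\overline D}$ extends to a real-analytic function $\tilde u$ on some disk $B_\epsilon(O)$. Since $D$ is a compact convex polygon contained in the open disk $B_R$, after shrinking $\epsilon$ we may assume $B_\epsilon(O)\subset B_R$, and the exterior piece $\omega:=B_\epsilon(O)\setminus\overline D$ is a non-empty open sector on which $f\equiv0$ and $n\equiv n_0$. First I would note that $\Delta\tilde u+k^2 n_0\tilde u=\Delta u+k^2 n_0 u=0$ on $\omega$; by real-analyticity of $\tilde u$ and connectedness of $B_\epsilon(O)$ this propagates to
\[
\Delta\tilde u+k^2 n_0\tilde u=0 \quad\text{in } B_\epsilon(O).
\]
Next, set $w:=u-\tilde u$ on the corner domain $\mathcal K:=D\cap B_\epsilon(O)$. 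Then $w\in H^2(\mathcal K)$ and $\Delta w+k^2 n_0 w=f$ in $\mathcal K$. Moreover, since $u\in H^2_{\mathrm{loc}}(\R^2)$, the traces of $u$ and $\nabla u$ on $\partial D\cap B_\epsilon(O)$ coincide from both sides, while $\tilde u$ is smooth there and $\tilde u\equiv u$ on $\omega$; combining these facts gives $w=\partial_\nu w=0$ on $\partial D\cap B_\epsilon(O)$. Thus $w$ solves the inhomogeneous Helmholtz equation in the corner domain $\mathcal K$ with vanishing Cauchy data on the two edges meeting at $O$, and with right-hand side $f$.

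At this point the singularity analysis carried out in the proofs of Theorems~\ref{thm:Holdercontin-cornscatter2018} and~\ref{thm:analytic-cornscatter2018} — where such a $w$ arises as the difference of two radiated fields with identical far-field patterns — applies directly: under condition (i) of Corollary~\ref{thm:NonRadiatingSource-cornscatter2018} the existence of such a $w$ forces $\partial^\beta f(O)=\partial^{\beta'}f(O)=0$ for the multi-indices in \eqref{eq:assumption}, contradicting \eqref{eq:assumption}; under condition (ii) it forces $f=v$ to vanish to infinite order at $O$, so $v\equiv0$ on the open set $\mathcal K$, and weak unique continuation for $\Delta v+A\cdot\nabla v+b v=0$ in the connected set $B_R$ then yields $v\equiv0$ in $B_R$, i.e. $D=\mathrm{supp}(f)=\emptyset$, contradicting $D\neq\emptyset$. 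Hence no such analytic extension exists. Finally, since $u$ is continuous across $\partial D$ near $O$ (indeed $u\in H^2_{\mathrm{loc}}(\R^2)\subset C^0$), analyticity of $u$ at $O$ would in particular provide an analytic extension of $u|_{B_R\setminus\overline D}$; therefore $u$ cannot be analytic at $O$ either.

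The genuinely new work here is the reduction in the second paragraph — converting the hypothetical analytic extension into a Cauchy problem via the $H^2$-matching of $u$ and $\nabla u$ across $\partial D$, which is routine but must be done with care. The step I expect to be the main obstacle, namely that a solution with vanishing Cauchy data in a corner cannot coexist with a source obeying (i) or (ii), is exactly the hard core of Theorems~\ref{thm:Holdercontin-cornscatter2018}--\ref{thm:analytic-cornscatter2018} and may be invoked from there.
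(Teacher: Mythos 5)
Your proposal is correct and follows essentially the same route as the paper: assume an analytic extension exists, observe that by analytic continuation it must solve the homogeneous Helmholtz equation in a full neighborhood of $O$, form the difference with $u$ to obtain the Cauchy problem \eqref{eq:utildeNonRa1-cornscatter2018} with vanishing data on the two edges, and then invoke the corner analysis behind Theorems \ref{thm:Holdercontin-cornscatter2018} and \ref{thm:analytic-cornscatter2018} to contradict conditions (i) and (ii). Your write-up is somewhat more explicit than the paper's (in justifying the propagation of the PDE to the whole disk and the $H^2$ trace matching), but the substance is identical.
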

\rot{In an inhomogeneous background medium, $u$ maybe not analytic in $\R^2\backslash\overline{D}$ if
 $n(x)\in C^{0,\alpha}(\R^2)$. However,
Corollary \ref{cor4} still holds if we properly define the \emph{extension} across a corner point.} Corollary \ref{cor4} motivates us to design a data-driven and domain-defined sampling scheme for imaging a convex polygonal source support (see Section \ref{inversion} for more details).

From the proofs of Theorems \ref{thm:Holdercontin-cornscatter2018} and  \ref{thm:analytic-cornscatter2018}, we can easily show the following results.
\begin{cor} \label{cor3}
\begin{itemize}
\item[(i)] Assume that $D=\text{supp}(f)$ is a (non-empty) convex polygon. For  each corner point $O_j\in\partial D$, suppose that
$f\in C^{0, \alpha_j}(D\cap B_\epsilon(O_j))$ ($\alpha_j\in (0,  1)$) has the asymptotic behavior
\begin{equation} \label{eq:f}
		f(x) = r^{N_j}(A_j \cos N_j \theta + B_j \sin N_j\theta) +  o(r^{N_j}), \quad |x| \to 0,\,
		\end{equation}
		for some $N_j\in \N_0$ and $A_j, B_j\in\mathbb C$ with $|A_j|+|B_j|>0$,
uniformly in all directions $\hat{x}=x/|x|$ such that $x\in D\cap B_\epsilon(O)$. Then $\partial D$ can be uniquely determined by a single far-field pattern $u^\infty$.
\item[(ii)]  The results in Corollaries \ref{thm:NonRadiatingSource-cornscatter2018} and  \ref{cor4} hold true if the lowest order expansion of $f$ around the corner takes the form \eqref{eq:f}.
\end{itemize}
\end{cor}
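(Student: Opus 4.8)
The plan is to derive both assertions by going back to the proofs of Theorems~\ref{thm:Holdercontin-cornscatter2018} and~\ref{thm:analytic-cornscatter2018} (and of Corollaries~\ref{thm:NonRadiatingSource-cornscatter2018} and~\ref{cor4}) and isolating the single structural input on the source that the corner analysis actually uses, namely that the lowest-order term of $f$ at each corner is a \emph{nonzero homogeneous harmonic polynomial}. In polar coordinates centred at a corner $O_j$ this is exactly the expansion~\eqref{eq:f}: recall that $r^{N_j}\cos N_j\theta$ and $r^{N_j}\sin N_j\theta$ are the real and imaginary parts of $(x_1+\mathrm{i}x_2)^{N_j}$, hence harmonic. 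In Theorem~\ref{thm:Holdercontin-cornscatter2018} this input is furnished by the Taylor polynomial ($N_j=0$ if $f(O_j)\neq0$, $N_j=1$ if $f(O_j)=0$ but $\nabla f(O_j)\neq0$), and in Theorem~\ref{thm:analytic-cornscatter2018} by the fact that the leading term of any $v\in S(A,b)$ is automatically harmonic because the lower-order part $A\cdot\nabla+b$ perturbs only higher-order terms in the asymptotic expansion at $O_j$; here one simply postulates~\eqref{eq:f} directly, with an arbitrary degree $N_j\in\N_0$.

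For part~(i) I would argue by contradiction: if two sources $f_1,f_2$ with convex-polygonal supports $D_1\neq D_2$ (both obeying~\eqref{eq:f}) produced the same far-field pattern, then Rellich's lemma and unique continuation give $u_1\equiv u_2$ outside $\overline{D_1}\cup\overline{D_2}$, and by convexity one may choose a corner $O$ of, say, $D_1$ with $O\notin\overline{D_2}$ and $\epsilon>0$ small enough that $u_1\equiv u_2$ in $B_\epsilon(O)\setminus\overline{D_1}$. Translating $O$ to the origin, $v:=u_1-u_2$ is supported in the convex sector $K:=D_1\cap B_\epsilon(O)$ (opening $<\pi$), lies in $H^2_{loc}$, has vanishing Cauchy data on the two straight edges of $K$, and solves $\Delta v+k^2 n\,v=f_1$ in $K$. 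The same object $v$ appears directly in the other two statements: in Corollary~\ref{thm:NonRadiatingSource-cornscatter2018} with $v=u$ (a non-radiating source forces $u\equiv0$ in the connected set $\R^2\setminus\overline{D}$), and in Corollary~\ref{cor4} with $v=u-\tilde u$, where $\tilde u$ is a hypothetical analytic extension of $u$ across $O$ solving $\Delta\tilde u+k^2 n_0\tilde u=0$ near $O$. Thus all the claims reduce to: no such $v$ can exist when $f_1$ obeys~\eqref{eq:f} with $|A_j|+|B_j|>0$.

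To finish I would run the corner-singularity argument already used for Theorems~\ref{thm:Holdercontin-cornscatter2018}--\ref{thm:analytic-cornscatter2018}. Fix a unit vector $d$ with $d\cdot x\le-c|x|$ on $K$ and put $h_\tau(x):=e^{\tau(d+\mathrm{i}d^\perp)\cdot x}$, a harmonic function with $|h_\tau(x)|=e^{\tau d\cdot x}$ decaying exponentially on $K$ as $\tau\to+\infty$. Integrating $(\Delta v)\,h_\tau-v\,\Delta h_\tau$ over $K$, the edge terms vanish by the zero Cauchy data and the arc term is $O(e^{-c\epsilon\tau})$, so $\int_K f_1 h_\tau\,dx=k^2\int_K n\,v\,h_\tau\,dx$ up to an exponentially small error. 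Rescaling $x=y/\tau$ and using~\eqref{eq:f} together with dominated convergence gives
\[
\int_K f_1\,h_\tau\,dx=\tau^{-N_j-2}\Big(\int_{K_\infty} r^{N_j}(A_j\cos N_j\theta+B_j\sin N_j\theta)\,e^{(d+\mathrm{i}d^\perp)\cdot y}\,dy+o(1)\Big),
\]
where $K_\infty$ is the infinite sector and the $o(1)$ absorbs the $o(r^{N_j})$ remainder (uniform in direction). For a suitable admissible choice of $d$ the leading integral over $K_\infty$ is a nonzero multiple of a quantity bounded below in terms of $|A_j|+|B_j|\neq0$ — precisely the elementary computation in the proofs of Theorems~\ref{thm:Holdercontin-cornscatter2018} and~\ref{thm:analytic-cornscatter2018}. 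On the other side, the singularity analysis of $\Delta v=f_1-k^2 n v$ in the corner domain $K$ with vanishing Cauchy data (as in~\cite{ElHu2018acoustic}) yields $v(O)=0$, $\nabla v(O)=0$ and, after a bootstrap driven by~\eqref{eq:f}, $|v(x)|\le C|x|^{N_j+\mu}$ for some $\mu>0$, whence $\int_K n\,v\,h_\tau\,dx=O(\tau^{-N_j-2-\mu})=o(\tau^{-N_j-2})$. Dividing by $\tau^{-N_j-2}$ and letting $\tau\to+\infty$ forces the $K_\infty$-integral to vanish, a contradiction; hence $D_1=D_2$ in~(i), and the radiating and non-extension statements in~(ii) follow verbatim from the corresponding arguments with $n\equiv n_0$.

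The main obstacle is not the test-function computation (which is identical to the one in the main theorems) but verifying the regularity estimate $|v(x)|\le C|x|^{N_j+\mu}$ purely from the \emph{leading} behaviour~\eqref{eq:f} of $f_1$: one must check that the corner singularity analysis of the inhomogeneous Laplace equation is insensitive to whether the exponent $N_j$ is resonant with the opening angle, and this is exactly where I would lean on the machinery of~\cite{ElHu2018acoustic}. I would also point out that, in contrast with Theorem~\ref{thm:Holdercontin-cornscatter2018}, only the support $\partial D$ is asserted here: for $N_j\ge2$ the coefficients $A_j,B_j$ could in principle be read off from the same scaling, but recovering $f$ and $\nabla f$ at the corners as in Theorem~\ref{thm:Holdercontin-cornscatter2018} genuinely uses the $C^{1,\alpha}$ regularity, which~\eqref{eq:f} does not provide.
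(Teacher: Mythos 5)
Your reduction to the corner Cauchy problem (Rellich's lemma, unique continuation, the transmission conditions, and the choice of a corner of one support lying outside the other) is exactly the paper's; from that point on you take a genuinely different route. The paper disposes of the corner in one stroke: hypothesis \eqref{eq:f} is precisely hypothesis \eqref{eq:hform1-cornscatter2018} of Lemma \ref{lem:pre-cornscatter2018}, so part (i) follows by repeating Step 1 of the proof of Theorem \ref{thm:Holdercontin-cornscatter2018} with that lemma, and part (ii) by repeating the proofs of Corollaries \ref{thm:NonRadiatingSource-cornscatter2018} and \ref{cor4}; the lemma itself is proved by a Kondratiev-type decomposition of $\chi u$ into singular functions $r^{j\pi/\theta_0}$, $r^{N+2}\ln r$, etc., plus a biharmonic uniqueness result. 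You instead test the equation against the harmonic exponentials $h_\tau=e^{\tau(d+\mathrm{i}d^\perp)\cdot x}$ and extract the leading coefficient by Laplace asymptotics --- the approach of \cite{Bl2018,BLS}, not of this paper. This replaces Steps 3--4 of Lemma \ref{lem:pre-cornscatter2018} by an explicit sector integral, which is arguably more elementary; but note that for $N_j\geq 1$ you still need the decay $|v|\leq C|x|^{N_j+\mu}$ to make $\int_K n\,v\,h_\tau\,dx=o(\tau^{-N_j-2})$, and the only available source for that is the weighted-H\"older bootstrap of Steps 1--2 of the very same lemma. Your proof is therefore a hybrid: it keeps the hard regularity half of the paper's lemma and swaps out its singular-function half.

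Two points need attention before this can stand as a proof. First, the ``elementary computation'' you invoke for the nonvanishing of $\int_{K_\infty} r^{N}(A\cos N\theta+B\sin N\theta)e^{(d+\mathrm{i}d^\perp)\cdot y}\,dy$ does \emph{not} appear in the proofs of Theorems \ref{thm:Holdercontin-cornscatter2018} or \ref{thm:analytic-cornscatter2018} (the paper uses no CGO test functions anywhere), and it is not automatic: the integral reduces to $c_1(A-\mathrm{i}B)+c_2(A+\mathrm{i}B)$ with $|c_1|=|\sin\theta_0|\neq 0$ but $c_2$ possibly resonant, and for particular $(A,B)$ one orientation of $d^\perp$ can make the combination vanish; one must use both orientations and the strict inequality $|\sin((N+1)\theta_0)|<(N+1)|\sin\theta_0|$ (valid for $N\geq 1$, $\theta_0\in(0,\pi)$) to force $A=B=0$. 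Second, the remark following the corollary allows the opening angle in part (ii) to range over $(0,2\pi)\setminus\{\pi\}$; a reflex corner admits no direction $d$ with $d\cdot x\leq -c|x|$ on the sector, so your test functions are simply unavailable there, whereas the Kondratiev analysis behind Lemma \ref{lem:pre-cornscatter2018} is insensitive to whether $\theta_0$ is below or above $\pi$. For part (i) this is harmless (corners of a convex polygon have angle strictly less than $\pi$), but for part (ii) your argument as written covers only convex corners.
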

	We remark that the opening angle of the corner domain in Corollaries \ref{thm:NonRadiatingSource-cornscatter2018},  \ref{cor4} and Corollary \ref{cor3} (ii) is allowed to be any number in $(0, 2\pi)\backslash\{\pi\}$. The condition (\ref{eq:f}) in Corollary (\ref{cor3})	covers those assumptions made in Theorem \ref{thm:Holdercontin-cornscatter2018}, Corollary \ref{cor1} and Theorem \ref{thm:analytic-cornscatter2018} for identifying a source support.

The rest of the paper is organized as follows. In Section \ref{sec:preli-cornscatter2018}, we give some preliminaries of weighted H\"older spaces in an infinite sector and prove an important Lemma \ref{lem:pre-cornscatter2018}, which plays the key role to all of our uniqueness proofs. Section \ref{sec:contiSource-cornscatter2018} presents uniqueness results for recovering a convex-polygonal source support. Section \ref{sec:harmoSource-cornscatter2018} is devoted to the uniqueness in recovering source terms. In Section \ref{sec:radiatS-cornscatter2018}, we provide the proofs of Corollaries \ref{thm:NonRadiatingSource-cornscatter2018},  \ref{cor4} and \ref{cor3}. The reconstruction scheme will be described and compared with other approaches in Section \ref{inversion}.

\section{Preliminaries} \label{sec:preli-cornscatter2018}
In this section, we present some notations and auxiliary lemmas for weighted H\"older spaces in an infinite sector.
Denote by $O=(0,0)$ the origin in $\R^2$. Let $(r, \theta)$ be the polar coordinates of $x=(x_1, x_2)\in \R^2$.
Define $\Sigma =\Sigma_{\theta_0}:=\{ (r,\theta) ; r >0, |\theta| < \theta_0/2 \}$, an infinite sector in $\R^2$ with the opening angle $\theta_0 \in (0,\pi)$ and with the vertex $O$ located at the origin.
For $j \in \N_0$, we introduce $\nabla^j$ as the vector of all partial differential operators of order $j$ with respect to the spatial variable, that is
	\[
	\nabla^j \varphi (x) := \big\{ \partial_{x_1}^{j_1} \partial_{x_2}^{j_2} \varphi(x) \,;\, j_1,j_2 \in \N_0, j_1 + j_2 = j \big\}.
	\]
For $0 < \alpha < 1$, $l\in \N_0$ and $\beta\in \R$, we define the weighted H\"older spaces  $\Lambda_\beta^{l,\alpha} (\Sigma)$
 endowed with the norm  (see \cite{Kondratiev, KMR, MNP, NP})
	\begin{equation} \label{eq:FuncNOrm-cornscatter2018}
	\nrm[\Lambda_\beta^{l,\alpha} (\Sigma)]{\varphi}
	:=
	\sup_{x \in \Sigma} \left\{\sum_{j=0}^l |x|^{\beta-\alpha-l+j} |\nabla^j \varphi(x)|\right\}
	+
	\sup_{x,y \in \Sigma} \left\{\frac {\big| |x|^\beta \nabla^l \varphi(x) - |y|^\beta \nabla^l \varphi(y) \big|} {|x-y|^\alpha}\right\}.
	\end{equation}
	We may denote  $\Lambda_\beta^{l,\alpha} (\Sigma)$ as $\Lambda_\beta^{l,\alpha}$ if it is clear in the context.
	Obviously, any function from $\Lambda_\beta^{l,\alpha} (\Sigma)$ lies in $C^{l,\alpha}(\Sigma\cap \{(r,\theta): a<r<b\})$ for some $0<a<b$ and the subscript $\beta$ characterizes the asymptotic behaviour at the origin and at infinity.	
It can be verified that if $u \in \Lambda_\beta^{l,\alpha}$, then $\nabla^j u \in \Lambda_\beta^{l-j,\alpha}$ for all $j = 0,1,\cdots,l$.
In our applications we shall only care about the asymptotic behavior near the corner point.
For any function $u\in  \Lambda_\beta^{l,\alpha}(\Sigma)$ with a compact support in $\Sigma\cap B_R(O)$ ($R>0$), we have
\ben
\nabla^{j}u(x)=O(r^{l-j+\alpha-\beta})\qquad\mbox{as}\quad r\rightarrow 0^+\quad\mbox{in}\quad\Sigma,
\enn for $j=0,1\cdots, l$.
In particular, we have the decaying rate $u(x)=O(r^{l+\alpha-\beta})$  if $\beta<l+\alpha$ and the inclusion relation
	\begin{equation} \label{eq:inclusion-cornscatter2018}
	\Lambda_\beta^{l,\alpha} (\Sigma)\subset \Lambda_{\beta+1}^{l,\alpha}(\Sigma).
	\end{equation}

In the following two lemmas we prove some properties of  $\Lambda_\beta^{l,\alpha} (\Sigma)$ by definition.
	
		\begin{lem} \label{lem:product-cornscatter2018}
		Let $\beta \in \R$ and $\alpha,\, \alpha' \in (0,1)$ with $\alpha' \geq \alpha$ and let $\Sigma$ be an infinite sector.  Assume that $g \in \Lambda_\beta^{0,\alpha}(\Sigma)$ with compact support and that $f \in C^{0,\alpha'}(\Sigma)$. Then the product $f g$ belongs to the space $\Lambda_\beta^{0,\alpha}(\Sigma)$.
	\end{lem}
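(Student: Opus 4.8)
The plan is to verify the two parts of the $\Lambda_\beta^{0,\alpha}(\Sigma)$-norm in \eqref{eq:FuncNOrm-cornscatter2018} directly for the product $fg$, using that $g$ already lies in $\Lambda_\beta^{0,\alpha}(\Sigma)$ and that $f$ is merely (globally) H\"older continuous with exponent $\alpha' \geq \alpha$. For the supremum (weighted sup) term, with $l=0$ this is $\sup_{x\in\Sigma}|x|^{\beta-\alpha}|f(x)g(x)|$. Since $g$ has compact support, say in $\Sigma\cap B_\rho(O)$, the function $f$ is bounded on that support (it is continuous there), so $|x|^{\beta-\alpha}|f(x)g(x)| \le \|f\|_{L^\infty(\Sigma\cap B_\rho)}\,|x|^{\beta-\alpha}|g(x)|$, and the right-hand side is controlled by $\|g\|_{\Lambda_\beta^{0,\alpha}}$. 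This handles the first term with no difficulty.

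The substantive step is the seminorm term $\sup_{x,y\in\Sigma} \big| |x|^\beta f(x)g(x) - |y|^\beta f(y)g(y)\big|/|x-y|^\alpha$. The natural move is the standard product-rule splitting
\[
|x|^\beta f(x)g(x) - |y|^\beta f(y)g(y) = f(x)\big(|x|^\beta g(x) - |y|^\beta g(y)\big) + \big(f(x)-f(y)\big)\,|y|^\beta g(y).
\]
The first piece is bounded by $\|f\|_{L^\infty}\,\big||x|^\beta g(x)-|y|^\beta g(y)\big|$, which after dividing by $|x-y|^\alpha$ is controlled by $\|f\|_{L^\infty}\|g\|_{\Lambda_\beta^{0,\alpha}}$ — again using boundedness of $f$ on a neighbourhood of $\supp g$ (and noting both terms vanish when $x,y$ both lie outside $\supp g$; the mixed case where one point is in and one is out is absorbed since the relevant values of $g$ vanish or are small). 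The second piece is bounded by $|f(x)-f(y)|\,|y|^\beta|g(y)|$; we write $|f(x)-f(y)| \le \|f\|_{C^{0,\alpha'}}|x-y|^{\alpha'}$ and then $|x-y|^{\alpha'}/|x-y|^\alpha = |x-y|^{\alpha'-\alpha}$, which is bounded on bounded sets. We then need $|y|^\beta|g(y)|$ to be bounded, i.e. $|y|^{\beta-\alpha}|g(y)|$ bounded times $|y|^\alpha$ bounded — the first by $\|g\|_{\Lambda_\beta^{0,\alpha}}$, the second because $y\in\supp g$ is bounded away from infinity.

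The main obstacle — really the only one requiring care — is the bookkeeping around the \emph{compact support} hypothesis: one must restrict attention to pairs $x,y$ where at least one of them lies in $\supp g$ (otherwise the expression is identically zero), use compactness to get uniform bounds $|x|,|y| \le \rho$ and $\|f\|_{L^\infty(\Sigma\cap B_\rho)} < \infty$, and argue that the factor $|x-y|^{\alpha'-\alpha}$ stays bounded because it need only be estimated for $|x-y| \lesssim \rho$ (when $|x-y|$ is large, $g$ vanishes at one endpoint and the corresponding term is handled by the weighted-sup bound already established). One subtlety worth a sentence: to make the splitting legitimate when, say, $x\in\supp g$ but $y\notin\supp g$, note $g(y)=0$, so only the first piece survives and it equals $f(x)|x|^\beta g(x)$, bounded as above. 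Assembling these estimates yields $\|fg\|_{\Lambda_\beta^{0,\alpha}(\Sigma)} \le C\,\|f\|_{C^{0,\alpha'}(\Sigma\cap B_\rho)}\,\|g\|_{\Lambda_\beta^{0,\alpha}(\Sigma)} < \infty$, which is the claim. (The same argument, incidentally, extends to $l\ge 1$ via the Leibniz rule, but only the $l=0$ case is needed here.)
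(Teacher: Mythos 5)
Your proposal is correct and follows essentially the same route as the paper: the same product-rule splitting $f(x)\bigl(|x|^\beta g(x)-|y|^\beta g(y)\bigr)+\bigl(f(x)-f(y)\bigr)|y|^\beta g(y)$, with the first piece absorbed by $\|f\|_{L^\infty}\|g\|_{\Lambda_\beta^{0,\alpha}}$ and the second by writing $|y|^\beta|g(y)|=|y|^\alpha\cdot|y|^{\beta-\alpha}|g(y)|$ and using compactness of $\supp g$ together with the H\"older bound on $f$. Your extra bookkeeping on the mixed case ($x\in\supp g$, $y\notin\supp g$) and on bounding $|x-y|^{\alpha'-\alpha}$ is a slightly more careful rendering of what the paper leaves implicit, but the argument is the same.
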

	\begin{proof} We need to prove the boundedness of the norm
		\begin{align}
			\nrm[\Lambda_\beta^{0,\alpha} (\Sigma)]{f g}
			& \ =
			\sup_{x \in \Sigma} |x|^{\beta-\alpha} |f(x) g(x)|
			+
			\sup_{x,y \in \Sigma} \frac {\big| |x|^\beta f(x) g(x) - |y|^\beta f(y) g(y) \big|} {|x-y|^\alpha} \nonumber\\
			& := \mathcal I_1 + \mathcal I_2. \label{eq:product.0-cornscatter2018}
		\end{align}
		The first term $\mathcal I_1$ in \eqref{eq:product.0-cornscatter2018} can be bounded by
		\begin{equation}
		\mathcal I_1
		\leq
		\sup_{x \in \Sigma} |f(x)| \cdot \sup_{x \in \Sigma} |x|^{\beta-\alpha} |g(x)|
		\leq
		\nrm[C^{0,\alpha}(\Sigma)]{f} \cdot \nrm[\Lambda_\beta^{0,\alpha}(\Sigma)]{g} < +\infty. \label{eq:product.1-cornscatter2018}
		\end{equation}
		Using the compactness of $\supp(g)$ and the triangle inequality, we can estimated the second term $\mathcal I_2$ by
		\begin{align}
		\mathcal I_2
		& \leq
		\sup_{x,y \in \Sigma} \frac {\big| |x|^\beta f(x) g(x) - |y|^\beta f(x) g(y) \big|} {|x-y|^\alpha}
		+
		\sup_{x,y \in \Sigma} \frac {\big| |y|^\beta f(x) g(y) - |y|^\beta f(y) g(y) \big|} {|x-y|^\alpha} \nonumber\\
		& \leq
		\sup_{x \in \Sigma} |f(x)| \cdot \sup_{x,y \in \Sigma} \frac {\big| |x|^\beta g(x) - |y|^\beta g(y) \big|} {|x-y|^\alpha}\\
		&\quad +
		\sup_{y \in \supp g} |y|^\alpha \cdot \sup_{y \in \Sigma} |y|^{\beta - \alpha} |g(y)| \cdot \sup_{x,y \in \Sigma} \frac {\big| f(x) - f(y) \big|} {|x-y|^\alpha} \nonumber\\
		& \leq \nrm[C^{0,\alpha'}(\Sigma)]{f} \cdot \nrm[\Lambda_\beta^{0,\alpha}(\Sigma)]{g} + C \nrm[\Lambda_\beta^{0,\alpha}(\Sigma)]{g} \cdot \nrm[C^{0,\alpha'}(\Sigma)]{f} \nonumber\\
		& < +\infty. \label{eq:product.2-cornscatter2018}
		\end{align}
		Combining \eqref{eq:product.0-cornscatter2018}-\eqref{eq:product.2-cornscatter2018}, we arrive at our conclusion.	
	\end{proof}

	\begin{lem} \label{lem:embding-cornscatter2018}
	Any compactly supported function in $\Lambda_{-N+1}^{2,\alpha'}(\Sigma)$ ($N\in \N_0$) belongs to $\Lambda_{-N}^{0,\alpha'}(\Sigma)$.
	\end{lem}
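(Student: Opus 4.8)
The plan is to verify the two defining seminorms of $\Lambda_{-N}^{0,\alpha'}(\Sigma)$ for a compactly supported $\varphi\in\Lambda_{-N+1}^{2,\alpha'}(\Sigma)$, using only elementary estimates together with the asymptotic decay already recorded in the excerpt. Recall that membership in $\Lambda_{-N+1}^{2,\alpha'}$ gives, in particular, $\varphi(x)=O(r^{2+\alpha'-(-N+1)})=O(r^{N+1+\alpha'})$ and, from the observation that $\nabla^j u\in\Lambda_{\beta}^{l-j,\alpha}$ for $u\in\Lambda_\beta^{l,\alpha}$, also $\nabla\varphi(x)=O(r^{N+\alpha'})$ as $r\to0^+$ in $\Sigma$; moreover $\varphi$ has compact support, say in $\Sigma\cap\overline{B_\rho(O)}$ for some $\rho>0$.

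First I would bound the sup-part $\sup_{x\in\Sigma}|x|^{-N-\alpha'}|\varphi(x)|$. On the compact annular region $\{a\le r\le\rho\}$ (for any fixed small $a>0$) this is finite since $\varphi\in C^{0,\alpha'}$ there and $|x|^{-N-\alpha'}$ is bounded; near the origin it is controlled by the decay $\varphi(x)=O(r^{N+1+\alpha'})$, so $|x|^{-N-\alpha'}|\varphi(x)|=O(r)\to0$; and for $r>\rho$ the function vanishes. Hence the first seminorm is finite. Second, I would treat the Hölder-part $\sup_{x,y\in\Sigma}|x-y|^{-\alpha'}\bigl||x|^{-N}\varphi(x)-|y|^{-N}\varphi(y)\bigr|$. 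Writing $\psi(x):=|x|^{-N}\varphi(x)$, I would show $\psi$ is $C^{0,\alpha'}$ up to the vertex with the required uniform bound: away from the origin $\psi$ is smooth (as a product of the smooth weight $|x|^{-N}$ and the $C^{2,\alpha'}$ function $\varphi$), while near the origin I estimate $\nabla\psi=|x|^{-N}\nabla\varphi+\varphi\,\nabla(|x|^{-N})$, and using $\nabla\varphi=O(r^{N+\alpha'})$ and $\varphi=O(r^{N+1+\alpha'})$, $|\nabla(|x|^{-N})|=O(r^{-N-1})$, I get $\nabla\psi(x)=O(r^{\alpha'})$, which is bounded near $O$; combined with $\psi$ being bounded (by the first part) this yields a uniform $C^{0,\alpha'}$ bound on $\psi$ over $\Sigma$ by a standard mean-value/interpolation argument handling separately the cases $|x-y|$ small relative to $\min(|x|,|y|)$ and $|x-y|$ comparable to it.

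The technically delicate step — and the one I expect to be the main obstacle — is the Hölder estimate for $\psi=|x|^{-N}\varphi$ near the vertex when $x$ and $y$ lie at very different distances from $O$ (e.g. $|y|\ll|x|$), because there the naive bound $|\psi(x)-\psi(y)|\le|x-y|\sup|\nabla\psi|$ along a segment may pass through points where the segment exits $\Sigma$ or approaches $O$ where $\nabla\psi$, though bounded, must be integrated carefully. The remedy is the usual dyadic decomposition: split the segment from $x$ to $y$ (or a polygonal path staying inside the convex sector $\Sigma$) into pieces on which $r$ is comparable to a fixed dyadic scale $2^{-m}$, on each piece use $|\nabla\psi|=O((2^{-m})^{\alpha'})$ and length $O(2^{-m})$, and sum the geometric series $\sum 2^{-m(1+\alpha')}$, which converges and is dominated by $|x-y|^{\alpha'}$ up to a constant. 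Since $\Sigma$ is convex the straight segment between any two of its points stays in $\Sigma$, so no path-modification is actually needed; one only needs the scale decomposition. Finally, for pairs with one point in $\{r>\rho\}$ (where $\varphi\equiv0$) one reduces to the boundary case $r=\rho$ by the decay already used. Assembling these estimates gives $\|\varphi\|_{\Lambda_{-N}^{0,\alpha'}(\Sigma)}<\infty$, i.e. $\varphi\in\Lambda_{-N}^{0,\alpha'}(\Sigma)$, as claimed.
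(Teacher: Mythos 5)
Your argument is correct and follows essentially the same route as the paper: both set $\psi(x)=|x|^{-N}\varphi(x)$, use the weighted bounds $\varphi=O(r^{N+1+\alpha'})$ and $\nabla\varphi=O(r^{N+\alpha'})$ (i.e.\ finiteness of the $j=0,1$ terms in the $\Lambda_{-N+1}^{2,\alpha'}$-norm) to get $|\nabla\psi|=O(r^{\alpha'})$ uniformly bounded on the compact support, and then deduce the sup-seminorm from the decay and the H\"older seminorm from a mean-value estimate on the convex sector. The dyadic decomposition you single out as the delicate step is superfluous: since $\nabla\psi$ is \emph{uniformly bounded} (it tends to $0$ at the vertex rather than blowing up), the straight-segment bound $|\psi(x)-\psi(y)|\le \|\nabla\psi\|_{L^\infty}|x-y|$ already yields $|\psi(x)-\psi(y)|/|x-y|^{\alpha'}\le \|\nabla\psi\|_{L^\infty}|x-y|^{1-\alpha'}$, which is bounded because the relevant $x,y$ lie in a bounded set --- exactly the one-line argument the paper uses.
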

	\begin{proof}
		Suppose that $\varphi \in \Lambda_{-N+1}^{2,\alpha'}(\Sigma)$ has a compact support in $\Sigma$. By the definition of the norm \eqref{eq:FuncNOrm-cornscatter2018}, we obtain
		\begin{equation} \label{eq:embd1-cornscatter2018}
		\mathcal M_0 := \sup_{x \in \Sigma} |x|^{-N-1-\alpha'} |\varphi(x)| < +\infty \quad\text{and}\quad \mathcal M_1 := \sup_{x \in \Sigma} |x|^{-N-\alpha'} |\nabla \varphi(x)| < +\infty.
		\end{equation}
		To prove $\varphi \in \Lambda_{-N}^{0,\alpha'}(\Sigma)$, we only need to prove the boundedness of		
		\begin{equation} \label{eq:embd4-cornscatter2018}
				\mathcal M_2
		:=
		\sup_{x \in \Sigma} |x|^{-N-\alpha'} |\varphi(x)|,
		\quad
		\mathcal M_3
		:=
		\sup_{x,y \in \Sigma} \frac {\big| |x|^{-N} \varphi(x) - |y|^{-N} \varphi(y) \big|} {|x-y|^{\alpha'}}.
		\end{equation}
		
	We first prove $\mathcal M_3<\infty$. Write $\psi(x) := |x|^{-N} \varphi(x)$. The derivative of $\psi$ can be estimated by		\begin{align*}
		|\partial_{x_j} \psi(x)|
		& = \big| -N |x|^{-N-1} \frac {x_j} {|x|} \varphi(x) + |x|^{-N} \partial_{x_j} \varphi(x) \big| \nonumber\\
		& \leq |N| |x|^{-N-1} |\varphi(x)| + |x|^{-N} |\partial_{x_j} \varphi(x)| \nonumber\\
		& \leq |N| |x|^{\alpha'} \mathcal M_0 + |x|^{\alpha'} \mathcal M_1,
		\end{align*} for all $x\in\mbox{supp}(\varphi) $ and  $j=1,2$, implying that
\begin{equation} \label{eq:embd2-cornscatter2018}
		|\nabla \psi(x)| \leq C (\mathcal M_0 + \mathcal M_1) < +\infty \quad \text{for all} ~ x \in \Sigma \cap \supp \varphi.
		\end{equation}
		Now we can estimate $\mathcal M_3$ by applying the mean-value theorem
		\begin{align}\label{eq:embd222-cornscatter2018}
		\mathcal M_3
		 = \sup_{x,y \in \Sigma} \frac {\big| \psi(x) - \psi(y) \big|} {|x-y|^{\alpha'}}
		 \leq ||\nabla \psi||_{L^\infty(\Sigma \cap \supp \varphi )} \sup_{x,y \in \Sigma \cap \supp \varphi} |x-y|^{1-\alpha'}<\infty.
		\end{align}
Writing $\mathcal M_2$ as $\mathcal M_2 = \sup_{x \in \Sigma} \big( |x| \cdot |x|^{-N-1-\alpha'} |\varphi(x)| \big)$ and using the compactness of $\supp \varphi$, we get
		\begin{equation} \label{eq:embd3-cornscatter2018}
		\mathcal M_2 \leq \big( \sup_{x \in \Sigma \cap \supp \varphi} |x| \big) \cdot \sup_{x \in \Sigma} \big( |x|^{-N-1-\alpha'} |\varphi(x)| \big) \leq C \mathcal M_0 < +\infty.
		\end{equation}
Combining \eqref{eq:embd222-cornscatter2018} and \eqref{eq:embd3-cornscatter2018} we conclude that $\varphi \in \Lambda_{-N}^{0,\alpha'}(\Sigma)$. The proof is complete.
	\end{proof}
	
	The proofs of our main results depend on
	Lemma \ref{lem:pre-cornscatter2018} below, which is motivated by Propositions 10, 12 and 13 of \cite{ElHu2018acoustic}.
	Introduce the finite sector
	$$\Sigma_\epsilon := \{ (r,\theta) \in \R^2: \,\, 0 < r < \epsilon,\, 0 < \theta < \theta_0 \},\quad \theta_0\in (0, \pi), \quad 0<\epsilon<1$$ and its partial boundary
	$$
	\Gamma_\epsilon:=\{ (r,\theta) \in \R^2 \,:\, 0 < r < \epsilon,\, \theta=0, \theta_0 \}. $$
	
	\begin{lem} \label{lem:pre-cornscatter2018}
		Suppose  $q, h \in C^{0,\rot{\alpha}}(\overline{\Sigma_\epsilon})$ for some
		$\rot{\alpha}\in(0,1)$ and that
		 the \rot{lowest order expansion} of $h$ as $|x|\rightarrow 0$ is harmonic, that is, the asymptotic expansion
		\begin{equation} \label{eq:hform1-cornscatter2018}
		h(x) = r^N\left(A \cos (N \theta) + B \sin (N\theta)\right) + \mathcal O(r^{N+\alpha}), \quad |x| \to 0,\, x \in \Sigma_\epsilon
		\end{equation}
holds uniformly in all $\theta\in(0, \theta_0)$ for some
 $N\in\mathbb N_0$ and $A$, $B \in \mathbb C$. Let $u \in H^2(\Sigma_\epsilon)$ be a solution to the boundary value problem
		\begin{equation} \label{eq:BVPh-cornscatter2018}
		\mbox{(BVP)}:\quad \begin{cases}
		\Delta u + q(x) u = h(x) & \mbox{in } \quad\Sigma_\epsilon, \\
		u = \partial_\nu u = 0 & \mbox{on }\quad \Gamma_\epsilon,
		\end{cases}
		\end{equation}
		where  $\partial_\nu u$ denotes the normal derivative of $u$. Then it holds that $A = B = 0$.
	\end{lem}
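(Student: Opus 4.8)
\textbf{Proof strategy for Lemma \ref{lem:pre-cornscatter2018}.}

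The plan is to argue by contradiction, assuming $|A|+|B|>0$, and to extract a contradiction from the overdetermined (Cauchy) boundary conditions on $\Gamma_\epsilon$ by a careful analysis of the regularity of $u$ near the vertex $O$. First I would absorb the lower-order term: writing $h_1(x):=h(x)-q(x)u(x)$, the equation becomes $\Delta u = h_1$ in $\Sigma_\epsilon$ with $u=\partial_\nu u=0$ on $\Gamma_\epsilon$. Since $u\in H^2(\Sigma_\epsilon)$ is in particular continuous, bounded, and vanishes at $O$, and since $q\in C^{0,\alpha}(\overline{\Sigma_\epsilon})$, the product $qu$ is at worst bounded and (after a preliminary gain of regularity near the corner) lies in a weighted H\"older class with a \emph{strictly higher} decay exponent than $h$ itself; in particular the leading-order expansion of $h_1$ as $|x|\to 0$ is still given by the harmonic polynomial $r^N(A\cos N\theta+B\sin N\theta)$, with the remainder now of order $\mathcal O(r^{N+\alpha'})$ for some $\alpha'\in(0,1)$ (here Lemmas \ref{lem:product-cornscatter2018} and \ref{lem:embding-cornscatter2018} are exactly the tools that let us place $qu$ and the remainder in the appropriate spaces $\Lambda_\beta^{0,\alpha'}(\Sigma)$).

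Next I would construct an explicit particular solution. Because the leading term of the right-hand side is the harmonic homogeneous polynomial $P_N(x):=r^N(A\cos N\theta+B\sin N\theta)$, one can solve $\Delta w_0 = P_N$ explicitly by a homogeneous polynomial $w_0$ of degree $N+2$ (for instance $w_0 = \tfrac{1}{4(N+1)}\,r^2 P_N$ works, since $\Delta(r^2 P_N) = 4(N+1)P_N$ when $P_N$ is harmonic and homogeneous of degree $N$). Subtracting, $v:=u-w_0$ satisfies $\Delta v = h_1 - P_N = \mathcal O(r^{N+\alpha'})$, i.e. a right-hand side in a weighted H\"older space with decay $N+\alpha'$. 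Elliptic regularity in the corner domain in weighted H\"older spaces (the Kondrat'ev--Maz'ya--Nazarov--Plamenevskii theory cited in the paper, adapted as in \cite{ElHu2018acoustic}) then yields a gain of two derivatives and an asymptotic expansion $v = \sum c_{m} r^{m}\Phi_m(\theta) + \mathcal O(r^{N+2+\alpha'})$ where the exponents $m$ and angular profiles $\Phi_m$ are dictated by the corner; crucially, the Cauchy condition $u=\partial_\nu u=0$ on $\Gamma_\epsilon$ must be inherited by $v$ minus the trace of $w_0$, and one tracks which homogeneous harmonics are compatible with vanishing Cauchy data on the two sides $\theta=0,\theta_0$ of an opening angle $\theta_0\in(0,\pi)$.

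The heart of the argument is then a matching/orthogonality computation at order $r^N$ (or $r^{N+2}$ after the shift). Testing the equation $\Delta u = h_1$ against the explicit harmonic functions $r^{-N}\cos N\theta$ and $r^{-N}\sin N\theta$ (which are harmonic in the punctured sector) on the annular region $\Sigma_\epsilon\setminus\overline{\Sigma_\delta}$, integrating by parts twice, and letting $\delta\to 0$, the volume term picks out precisely $A$ and $B$ (times a nonzero constant depending on $\theta_0$ and $N$, using $\theta_0\neq\pi$ so the relevant angular integrals do not degenerate), while the boundary contributions on $\Gamma_\epsilon$ vanish by the homogeneous Cauchy data and the contributions on $\{r=\epsilon\}$ are harmless data that can be handled or eliminated by also using a second, localized test function (a cutoff of the above), and the contributions on $\{r=\delta\}$ vanish in the limit because of the decay rates established above. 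This forces $A=B=0$, the desired contradiction. The main obstacle I anticipate is bookkeeping the weighted-space regularity so that the inner boundary terms on $\{r=\delta\}$ genuinely vanish as $\delta\to 0$ — this is where one must be careful that the "competing" corner exponent $-N$ of the test function against the $+N$ decay of $u$ leaves exactly a borderline (log-free) balance that the $\mathcal O(r^{N+\alpha'})$ remainder tips in our favor; getting the right-hand side into a space with decay strictly better than $r^N$, which is exactly what the harmonicity assumption \eqref{eq:hform1-cornscatter2018} on the leading term buys us, is what makes the scheme close.
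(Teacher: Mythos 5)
Your first two paragraphs track the paper's Steps 1--2 quite closely: absorbing $qu$ into the right-hand side, bootstrapping in the weighted H\"older scale $\Lambda_\beta^{l,\alpha}(\Sigma)$ via Lemmas \ref{lem:product-cornscatter2018} and \ref{lem:embding-cornscatter2018} to get $u=\mathcal O(r^{N+1+\alpha'})$, $\nabla u=\mathcal O(r^{N+\alpha'})$ at the vertex, and peeling off an explicit particular solution of $\Delta w_0=P_N$ (your formula $w_0=\tfrac{1}{4(N+1)}r^2P_N$ is correct). The gap is in your third paragraph, the step that is supposed to produce $A=B=0$. Pairing $\Delta u=h_1$ with the fixed harmonic test functions $r^{-N}\cos N\theta$, $r^{-N}\sin N\theta$ on $\Sigma_\epsilon\setminus\overline{\Sigma_\delta}$ and letting $\delta\to 0$ yields only the identity
\begin{equation*}
\int_{\Sigma_\epsilon} h_1\,\phi\,dx=\int_{\{r=\epsilon\}}\bigl(\partial_r u\,\phi-u\,\partial_r\phi\bigr)\,ds ,
\end{equation*}
(the side contributions vanish by the Cauchy data and the inner arc is indeed harmless given the bootstrap decay, so that is not the real obstacle). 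The right-hand side involves the unknown Cauchy data of $u$ on the outer arc and is not zero in general; the identity therefore carries no contradiction. Your proposed fix --- multiplying the test function by a cutoff --- does not eliminate these terms, it merely converts them into an equally unknown volume term $\int u\,\Delta(\chi\phi)$ supported where $\chi$ varies. Arguments of this "orthogonality" type only close when the test family carries a large parameter that localizes at the vertex (as with the CGO exponentials $e^{\rho\cdot x}$ in \cite{BLS,Bl2018}); your test functions are fixed and provide no such localization.

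The paper closes the argument by a different, purely local mechanism: because $\chi u$ has vanishing Dirichlet \emph{and} Neumann data on the sides, it simultaneously solves the Dirichlet BVP \eqref{eq:Deltauhh1D-cornscatter2018} and the Neumann BVP \eqref{eq:Deltauhh1N-cornscatter2018} in the infinite sector. The singular-function expansions of these two problems have incompatible angular profiles ($r^{j\pi/\theta_0}\sin(j\pi\theta/\theta_0)$ versus $r^{j\pi/\theta_0}\cos(j\pi\theta/\theta_0)$, plus logarithmic terms), so matching them kills all singular coefficients and leaves a single degree-$(N+2)$ polynomial $q_{N+2}$ satisfying $\Delta q_{N+2}=P_N$ in $\Sigma$ together with $q_{N+2}=\partial_\nu q_{N+2}=0$ on \emph{both} sides; hence $\Delta^2 q_{N+2}=0$ with overdetermined Cauchy data, which forces $q_{N+2}\equiv 0$ by \cite[Proposition 12]{ElHu2018acoustic} and therefore $P_N\equiv 0$, i.e.\ $A=B=0$. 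If you want to keep your framework, you would need to replace the fixed harmonic test functions either by this two-boundary-condition matching of corner expansions or by a genuinely localizing family; as written, the scheme does not reach the conclusion.
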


	The proof of Lemma \ref{lem:pre-cornscatter2018} is based on a precise characterization of the singularity of solutions to the inhomogeneous Laplacian equation in a corner domain (see, e.g., \cite[Chapters 2 and 3]{NP}). Our argument is a refine of the proof of \rot{\cite[Lemma 2]{ElHu2018acoustic}} under the assumption (\ref{eq:hform1-cornscatter2018}). Below we sketch the proof for the readers' \rot{convenience and refer to  \cite[Lemma 2]{ElHu2018acoustic}} for more details. 	
	\begin{proof}[Proof of Lemma \ref{lem:pre-cornscatter2018}]
	Introduce the cutoff function $\chi(r) \in C_0^\infty(\R)$ satisfying $\chi(r) \equiv 1$ when $r< \frac \epsilon 2$ and $\chi(r) \equiv 0$ when $r> \epsilon$.
		From \eqref{eq:BVPh-cornscatter2018} we deduce the inhomogeneous Laplacian equation defined over the infinite sector $\Sigma$:
		\begin{equation} \label{eq:PDEsChi-cornscatter2018}
		\Delta (\chi u) = \chi h - (\chi u)q + [\Delta,\chi] u=:f, \qquad \text{in}~ \Sigma,
		\end{equation}
		with the commutator operator
		\begin{equation*} 
		[\Delta,\chi] u := \Delta (\chi u) - \chi \Delta u = 2 \nabla \chi \cdot \nabla u + (\Delta \chi) u.
		\end{equation*}
		We are going to analyze the regularity of $\chi u$ (and thus $u$ itself) in a neighboring hood of the origin by using the vanishing of the Cauchy data of $\chi u$  on $\theta=0, \theta_0$ together with the decaying rate of $f$ near $O$. For clarity we shall divide the rest of the proof into four steps.
				
		\noindent {\bf Step 1}: Show that the right hand side of \eqref{eq:PDEsChi-cornscatter2018}  satisfies $f\in\Lambda_1^{0,\rot{\alpha}}(\Sigma)$.
		
		By the assumption of $h$ and the inclusion relation \eqref{eq:inclusion-cornscatter2018} for compactly supported functions, we obtain
		\begin{equation} \label{eq:hBelongs-cornscatter2018}
		 \chi h \in \Lambda_{-j}^{0,\rot{\alpha}}(\Sigma), \quad\text{for every}\quad j = -1,0,1,\cdots, N-1.
		\end{equation}
		The assumption $u \in H^2(\rot{\Sigma_\epsilon})$ implies
		\begin{equation} \label{eq:quStart-cornscatter2018}
		\chi\,u \in \Lambda_1^{0,\rot{\alpha}}(\Sigma). \quad \text{and} \quad (\chi u)\,q \in C^{0,\rot{\alpha}}(\Sigma) \subseteq \Lambda_1^{0,\rot{\alpha}}(\Sigma).		\end{equation}
We now consider the regularity of $[\Delta,\chi] u$ appearing in \eqref{eq:PDEsChi-cornscatter2018}.
	Since the supports of $\nabla \chi$ and $\Delta \chi$ are contained in ${\{ x \in \R^2 \,:\, \frac \epsilon 2 \leq |x| \leq \epsilon \}}$, it follows from \eqref{eq:BVPh-cornscatter2018} that
		\begin{equation} \label{eq:DeltauPDE-cornscatter2018}
		\Delta u = h(x) - q(x) u \quad \mbox{in } \Sigma' _\epsilon:= \Sigma_\epsilon \cap \{ \frac \epsilon 2 < |x| < \epsilon \}.
		\end{equation}
Since	$h \in C^{0,\rot{\alpha}}(\Sigma'_\epsilon),
		qu \in C^{0,\rot{\alpha}}(\Sigma'_\epsilon)$,
	by standard elliptic regularity theory it holds that
		\begin{equation} \label{eq:u2alpha.2-cornscatter2018}
		u \in C^{2,\rot{\alpha}}(\Sigma'_\epsilon),
		\end{equation}
		implying the relation
		\begin{equation} \label{eq:uTail-cornscatter2018}
		[\Delta,\chi] u \in \Lambda_{-j}^{0,\rot{\alpha}}(\Sigma) \quad\text{for any integer}~ j.
		\end{equation}

		\noindent {\bf Step 2}: Prove $\chi u \in \Lambda_{-N}^{0,\rot{\alpha}}(\Sigma)$.
		
		We can summarize from \eqref{eq:hBelongs-cornscatter2018}, \eqref{eq:quStart-cornscatter2018} and \eqref{eq:uTail-cornscatter2018} that
		\begin{equation} \label{eq:hBaaaelongs-cornscatter2018}
		\left\{\begin{aligned}
		\chi h & \in \Lambda_j^{0,\rot{\alpha}}(\Sigma), \quad\text{for every}~ j = 1,0,-1,\cdots, -N+1, \\
		(\chi u)\,q & \in \Lambda_\ell^{0,\rot{\alpha}}(\Sigma), \quad\text{where}\; \ell=1,\\
		[\Delta,\chi] u & \in \Lambda_j^{0,\rot{\alpha}}(\Sigma), \quad\text{for any integer}~ j.
		\end{aligned}\right.
		\end{equation}
		This implies that the right-hand-side of \eqref{eq:PDEsChi-cornscatter2018} belongs to $f\in {\Lambda_1^{0,\rot{\alpha}}(\Sigma)}$.
\rot{By the solvability of the Laplace equation in an infinite sector}
 (see e.g. \cite[Theorem 6.11, Chapter 3]{NP}), we obtain
		\begin{equation} \label{eq:chiu2-cornscatter2018}
			\chi u \in \Lambda_1^{2,\rot{\alpha}}(\Sigma).
		\end{equation}
Now, applying Lemma \ref{lem:embding-cornscatter2018} gives $\chi u \in \Lambda_{0}^{0,\rot{\alpha}}(\Sigma)$, which improves the subscript $\beta$ in the first step (cf. \ref{eq:quStart-cornscatter2018}) from $\beta=1$ to $\beta=0$. This means that $u$ at the corner point is getting less singular. \rot{Moreover, combining the fact $\chi u \in \Lambda_{0}^{0,\rot{\alpha}}(\Sigma)$}  with Lemma \ref{lem:product-cornscatter2018} leads to 		
	those relations in \eqref{eq:hBaaaelongs-cornscatter2018} where the subscript $\ell$ is replaced by $0$, which in turn gives $\chi u \in \Lambda_0^{2,\rot{\alpha}}(\Sigma)$.	
	Repeating this process, we finally arrive at
\begin{equation} \label{eq:uBelongs-cornscatter2018}
	f\in\Lambda_{-N+1}^{0,\rot{\alpha}}(\Sigma),\quad\chi u \in \Lambda_{-N+1}^{2,\rot{\alpha}}(\Sigma)\subset
	\Lambda_{-N}^{0,\rot{\alpha}}(\Sigma').
		\end{equation}

		\noindent {\bf Step 3}: \rot{Singularity analysis of (\ref{eq:PDEsChi-cornscatter2018}).}
		
		Write $h_1(x) := f- r^N(A \cos N \theta + B \sin N\theta) \chi$. From \eqref{eq:BVPh-cornscatter2018} and the definition of $\chi$, it follows that the function $\chi u$ solves the following Dirichlet and Neumann boundary value problems:
		\begin{equation} \label{eq:Deltauhh1D-cornscatter2018}
		\text{(Dirichlet BVP)}~
		\begin{cases}
		\Delta (\chi u) = r^N(A \cos N \theta + B \sin N\theta)\chi + h_1(x) & \mbox{in } \Sigma \\
		(\chi u) = 0 & \mbox{on } \partial \Sigma,
		\end{cases}
		\end{equation}
		\begin{equation} \label{eq:Deltauhh1N-cornscatter2018}
		\text{(Neumann BVP)}~
		\begin{cases}
		\Delta (\chi u) = r^N(A \cos N \theta + B \sin N\theta) + h_1(x) & \mbox{in } \Sigma \\
		\partial_\nu (\chi u) = 0 & \mbox{on } \partial \Sigma.
		\end{cases}
		\end{equation}
Note that $r^N(A \cos N \theta + B \sin N\theta) \chi\in \Lambda_{-N+1}^{0,\rot{\alpha}}(\Sigma)$.  From \eqref{eq:hform1-cornscatter2018}, we see
\ben
\chi \left[h- r^N(A \cos N \theta + B \sin N\theta) \right] \in \Lambda_{-N}^{0,\rot{\alpha}}(\Sigma).
\enn
Together with
 \eqref{eq:uTail-cornscatter2018} and \eqref{eq:uBelongs-cornscatter2018}, this gives  $h_1\in \Lambda_{-N}^{0,\alpha'}(\Sigma) \subset \Lambda_{-N+1}^{0,\alpha'}(\Sigma)$ for all $0 < \alpha' < \alpha$. Therefore, from [Theorem 6.11, Chapter 3, \citen{NP}], the Dirichlet BVP \eqref{eq:Deltauhh1D-cornscatter2018} admits a unique solution in $\Lambda_{-N+1}^{2,\alpha'}(\Sigma)$ if $\alpha' \neq \frac {j\pi} {\theta_0} - 1 - N\, (j\in \mathbb{Z})$, where $\theta_0$ is the opening angle of the sector $\Sigma$. Further,  using  \cite[Proposition 5]{ElHu2018acoustic}, \cite[Proposition 2.12, Chapter 2]{NP} and the arbitrariness of $\alpha'\in(0, \alpha) $,
  the function $\chi u\in  \Lambda_{-N}^{0,\alpha'}(\Sigma)  $  around the corner can be decomposed into two parts:
 $\chi u = u_{\mathcal D}^{(1)} + u_{\mathcal D}^{(2)}$ where
		\begin{align*}
		u_{\mathcal D}^{(1)} & = q_{\mathcal D,N+2} + C_{\mathcal D}\, r^{N+2}[\ln r \sin(N+2)\theta + \theta \cos(N+2)\theta], \quad C_{\mathcal D}\in \mathbb C,
		\nonumber \\
		u_{\mathcal D}^{(2)} & = d_{\mathcal D}r^{N+2} \sin(N+2)\theta+\sum_{j\in I(\theta_0, N)} d_{{\mathcal D},j}\, r^{\frac {j\pi} {\theta_0}} \sin (\frac {j\pi} {\theta_0} \theta) + \mathcal O(r^{N+2+\alpha'}),\quad d_{\mathcal D},
	d_{{\mathcal D},j}\in \mathbb C, 	
		\end{align*}
		where $$I(\theta_0, N):=\{j: \frac {j\pi} {\theta_0} \in (N+1, N+2)\}, \quad
		d_{\mathcal D}=0\quad\mbox{if}\; (N+2)\theta_0/\pi\notin \N,
		$$ and $q_{\mathcal D,N+2}$ is a polynomial of order $N+2$ satisfying
		\be \label{eq:qDN2-cornscatter2018}
		\Delta q_{\mathcal D,N+2} = r^N(A \cos N \theta + B \sin N\theta)\quad \mbox{in}\quad\Sigma,\qquad
		q_{\mathcal D,N+2} =0 \quad\mbox{on}\quad \partial \Sigma.		
		\en
	Similarly, by \cite[Chapter 2, Proposition 2.12]{NP} the Neumann BVP \eqref{eq:Deltauhh1N-cornscatter2018} admits a unique solution $\chi u= u_{\mathcal N}^{(1)} + u_{\mathcal N}^{(2)}\in\Lambda_{-N+1}^{2,\alpha'}(\Sigma) \subset  \Lambda_{-N}^{0,\alpha'}(\Sigma)  $ where
		\begin{align}
& u_{\mathcal N}^{(1)}(x)= q_{{\mathcal N},N+2}(x) + C_{\mathcal N} r^{N+2}[\ln r \cos(N+2)\theta - \theta \sin(N+2)\theta], \quad C_{\mathcal N}\in \mathbb C,    \nonumber\\ \nonumber
 & u_{\mathcal N}^{(2)}(x)= d_{\mathcal N}r^{N+2} \cos(N+2)\theta+ \sum_{\substack{j \in I(\theta_0, N)}} d_{{\mathcal N},j}\, r^{\frac {j\pi} {\theta_0}} \cos (\frac {j\pi} {\theta_0} \theta) + \mathcal O(r^{N+2+\alpha'}), \quad d_{\mathcal N}, d_{{\mathcal N},j}\in \mathbb C.
		\end{align}
		Here,  $d_{\mathcal N}=0$ if $(N+2)\theta_0/\pi\notin \N$ and		
		 $q_{{\mathcal N},N+2}$ is a polynomial of order $N+2$ satisfying
		\begin{equation} \label{eq:qNN2-cornscatter2018}
		\Delta q_{{\mathcal N},N+2} = r^N(A \cos N \theta + B \sin N\theta)\quad\mbox{in}\quad \Sigma,\qquad \partial_\nu q_{{\mathcal N},N+2}=0\quad\mbox{on}\quad \partial \Sigma.		\end{equation}

		\noindent {\bf Step 4}: Show that $A = B = 0$.
		
		
	We observe that $r^{N+2}\ln r = o(r^{N+1+\tau})~(r \to 0^+)$ for any $\tau \in (0,1)$ and
		\begin{equation} \label{eq:rDecayRate-cornscatter2018}
		r^{\frac {j\pi} {\theta_0}} \gg r^{N+2} \ln r \gg r^{N+2} \gg r^{N+2+\alpha'}\quad \text{as} \quad r \to 0^+,\qquad\mbox{for all}\quad j\in I(\theta_0, N).
		\end{equation}
		Thus we can conclude from  $\chi u= u_{\mathcal D}^{(1)} + u_{\mathcal D}^{(2)} =  u_{\mathcal N}^{(1)} + u_{\mathcal N}^{(2)}$ in $\Sigma$ that			
		\begin{equation} \label{eq:CnDnj-cornscatter2018}
		C_{\mathcal{D}}=C_{\mathcal N} = 0 \quad \text{and} \quad
		d_{\mathcal D,j} =d_{\mathcal N,j} = 0 \quad \text{for all}~ j\in I(\theta_0, N)
		\end{equation}
		and
		\ben
		q_{{\mathcal D},N+2}(x) + d_{\mathcal D}\, r^{N+2} \sin (N+2)\theta=
		q_{{\mathcal N},N+2}(x) + d_{\mathcal N}\, r^{N+2} \cos (N+2)\theta=:q_{N+2}	\enn
		Combining \eqref{eq:qDN2-cornscatter2018} and \eqref{eq:qNN2-cornscatter2018} yields
		\begin{equation} \label{eq:DeltaqN2-cornscatter2018}
		\Delta q_{N+2} = r^N(A \cos N \theta + B \sin N\theta)
		\end{equation}
		and further
		\begin{equation} \label{eq:Delta2qN2-cornscatter2018}
		\Delta^2 q_{N+2}
		= 0\qquad  \mbox{in } \Sigma,\qquad
		\quad q_{N+2} = \partial_\nu q_{N+2} = 0 \quad \mbox{on } \partial \Sigma.
		\end{equation}
		Applying [Proposition 12, \citen{ElHu2018acoustic}] to \eqref{eq:Delta2qN2-cornscatter2018}, we obtain $q_{N+2} = 0$ in $\Sigma$. Finally, it can be concluded from \eqref{eq:DeltaqN2-cornscatter2018} that $r^N(A \cos N \theta + B \sin N\theta) \equiv 0$ in $\Sigma$, which implies $A = B = 0$. The proof is complete.
	\end{proof}

	In the subsequent two corollaries we present examples of $h$ fulfilling
	the condition (\ref{eq:hform1-cornscatter2018})  in
	 Lemma  \ref{lem:pre-cornscatter2018}.	
	\begin{cor} \label{prop:pre-cornscatter2018}
		Assume in Lemma \ref{lem:pre-cornscatter2018} that  $h\in C^{1,\alpha}(\overline{\Sigma}_\epsilon)$ for some $0<\alpha<1$. Then,		%
	$h(O) = 0$ and $|\nabla h(O)| = 0$.
	\end{cor}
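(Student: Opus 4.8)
The plan is to reduce the stronger $C^{1,\alpha}$ hypothesis on $h$ to the harmonic-leading-term hypothesis \eqref{eq:hform1-cornscatter2018} of Lemma \ref{lem:pre-cornscatter2018}, and then apply that lemma. First I would expand $h$ at the corner $O$ by Taylor's theorem: since $h\in C^{1,\alpha}(\overline{\Sigma}_\epsilon)$, we may write $h(x) = h(O) + \nabla h(O)\cdot x + \mathcal{O}(|x|^{1+\alpha})$ as $|x|\to 0$, uniformly in $\theta\in(0,\theta_0)$. The key observation is that both the zeroth-order term $h(O)$ and the first-order term $\nabla h(O)\cdot x = (\partial_{x_1}h(O))\,r\cos\theta + (\partial_{x_2}h(O))\,r\sin\theta$ are harmonic functions, and each is of the precise form $r^N(A\cos N\theta + B\sin N\theta)$: the constant $h(O)$ corresponds to $N=0$ with $A=h(O)$, $B=0$, and the linear part corresponds to $N=1$ with $A=\partial_{x_1}h(O)$, $B=\partial_{x_2}h(O)$ (after rotating the sector if necessary so that the $x_1,x_2$ axes are aligned conveniently, though no rotation is actually needed since $r\cos\theta$ and $r\sin\theta$ already span the $N=1$ harmonic space).

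The argument then splits into two applications of Lemma \ref{lem:pre-cornscatter2018}. If $h(O)\neq 0$, then \eqref{eq:hform1-cornscatter2018} holds with $N=0$, $A=h(O)\neq 0$, $B=0$, and the remainder $h(x)-h(O) = \mathcal{O}(|x|)=\mathcal{O}(r^{0+\alpha})$ is of the required order $\mathcal{O}(r^{N+\alpha})$ — here I would note that $\mathcal{O}(r) \subset \mathcal{O}(r^{\alpha})$ near the origin since $\alpha<1$. Lemma \ref{lem:pre-cornscatter2018} then forces $A=h(O)=0$, a contradiction; hence $h(O)=0$. Granting $h(O)=0$, the expansion becomes $h(x) = \nabla h(O)\cdot x + \mathcal{O}(r^{1+\alpha})$. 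If $|\nabla h(O)|\neq 0$, this is exactly \eqref{eq:hform1-cornscatter2018} with $N=1$, $A=\partial_{x_1}h(O)$, $B=\partial_{x_2}h(O)$, and $|A|+|B|>0$, with remainder $\mathcal{O}(r^{1+\alpha}) = \mathcal{O}(r^{N+\alpha})$ as required. Lemma \ref{lem:pre-cornscatter2018} then yields $A=B=0$, i.e. $\nabla h(O)=0$, again a contradiction. Therefore $|\nabla h(O)|=0$ as well.

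The only mild subtlety — and the place where a careful writer must be precise rather than the genuine obstacle — is checking that the Taylor remainders have the correct H\"older-type decay uniformly in the angular variable, so that they genuinely fit the $\mathcal{O}(r^{N+\alpha})$ slot in \eqref{eq:hform1-cornscatter2018}; this is immediate from $h\in C^{1,\alpha}(\overline{\Sigma}_\epsilon)$ and the fact that $C^{1,\alpha}$ regularity up to the closed sector gives a uniform modulus of continuity for $\nabla h$. One should also observe that the two applications must be performed in order (first $N=0$, then $N=1$), since the $N=1$ expansion is only valid once $h(O)=0$ has been established; attempting both simultaneously would not match the single-leading-term structure demanded by \eqref{eq:hform1-cornscatter2018}. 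No genuinely hard estimate is needed: the corollary is essentially a translation of Lemma \ref{lem:pre-cornscatter2018} into the language of classical derivatives at the corner.
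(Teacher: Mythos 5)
Your proposal is correct and follows essentially the same route as the paper: Taylor-expand $h$ at $O$ using the $C^{1,\alpha}$ regularity, apply Lemma \ref{lem:pre-cornscatter2018} with $N=0$ to get $h(O)=0$, and then with $N=1$ to get $\nabla h(O)=0$, checking that the remainders are $\mathcal O(r^{N+\alpha})$. The only cosmetic difference is your proof-by-contradiction framing; since the lemma concludes $A=B=0$ unconditionally, one can apply it directly without assuming $h(O)\neq 0$ or $|\nabla h(O)|\neq 0$ first, as the paper does.
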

	\begin{proof}
	Since $h\in C^{1,\alpha}(\overline{\Sigma}_\epsilon)$, 	the function $h$ admits the asymptotic behavior
	\begin{equation} \label{eq:hform2-cornscatter2018}
		h(x) = h(O) + \nabla h(O) \cdot x + \mathcal O(|x|^{1+\alpha}), \quad |x| \to 0
	\end{equation}
	in $\Sigma_\epsilon$.
	Note that the expression of $h$ in \eqref{eq:hform2-cornscatter2018} takes the form \eqref{eq:hform1-cornscatter2018} with $N = 0$. Applying Lemma \ref{lem:pre-cornscatter2018} gives $h(O) = 0$. Then we have
		\begin{equation*} 
		h(x) = \nabla h(O) \cdot x + \mathcal O(|x|^{1+\alpha}), \quad |x| \to 0
		\end{equation*}
		which is of the from \eqref{eq:hform1-cornscatter2018} with $N = 1$. Again using Lemma \ref{lem:pre-cornscatter2018}, we arrive at $|\nabla h(O)| = 0$.
	\end{proof}

	\begin{cor} \label{lem:2-cornscatter2018}
	Suppose in Lemma \ref{lem:pre-cornscatter2018} that  $h\in S(A,b)$ where $A=(a_1, a_2)$ and $b$ are analytic near $O$. Then		%
	$h\equiv 0$ in $B_\epsilon (O)$.
	\end{cor}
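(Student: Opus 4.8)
The plan is to show that, under the extra hypothesis $h\in S(A,b)$, either $h$ vanishes near $O$ or $h$ has the asymptotic form \eqref{eq:hform1-cornscatter2018} with non-trivial leading coefficients; since Lemma~\ref{lem:pre-cornscatter2018} forbids the latter, $h$ must vanish. The argument parallels that of Corollary~\ref{prop:pre-cornscatter2018}. First I would record that $h$ solves $\Delta h+A\cdot\nabla h+b h=0$ in a neighbourhood of $O$ with real-analytic coefficients, so by interior analytic regularity for second-order elliptic equations with analytic coefficients, $h$ is real-analytic in some ball $B_\delta(O)$. If $h\not\equiv0$ near $O$, its Taylor series at $O$ is not identically zero, and in $B_\delta(O)$ we may write $h=\sum_{k\ge N}P_k$, where each $P_k$ is a homogeneous polynomial of degree $k$ and $N\in\N_0$ is minimal with $P_N\not\equiv0$.

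The key point is that $P_N$ must be harmonic. Near $O$ one has $\nabla h=\mathcal O(r^{N-1})$ and $h=\mathcal O(r^{N})$, and $A,b$ are bounded, so the right-hand side $-A\cdot\nabla h-b h$ is $\mathcal O(r^{N-1})$; hence $\Delta h=\mathcal O(r^{N-1})$ as $|x|\to0$. On the other hand $\Delta h=\Delta P_N+\sum_{k>N}\Delta P_k$, where $\Delta P_N$ is homogeneous of degree $N-2$ and the remaining sum is $\mathcal O(r^{N-1})$; comparing orders of vanishing forces $\Delta P_N\equiv0$ (trivially so when $N\le1$). A nonzero harmonic homogeneous polynomial of degree $N$ in $\R^2$ equals $r^N(\mathsf A\cos N\theta+\mathsf B\sin N\theta)$ in polar coordinates for constants $\mathsf A,\mathsf B$ with $|\mathsf A|+|\mathsf B|>0$, and the analyticity of $h$ then gives
\[
h(x)=r^N\big(\mathsf A\cos N\theta+\mathsf B\sin N\theta\big)+\mathcal O(r^{N+1}),\qquad |x|\to0,
\]
uniformly in $\theta\in(0,\theta_0)$. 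This is precisely the form \eqref{eq:hform1-cornscatter2018} (with a remainder even better than $\mathcal O(r^{N+\alpha})$), so Lemma~\ref{lem:pre-cornscatter2018} applies to the boundary value problem \eqref{eq:BVPh-cornscatter2018} with right-hand side $h$ and yields $\mathsf A=\mathsf B=0$, contradicting $P_N\not\equiv0$. Hence $h$ vanishes identically in a neighbourhood of $O$, and by the unique continuation principle for the elliptic operator $\Delta+A\cdot\nabla+b$ it follows that $h\equiv0$ in $B_\epsilon(O)$.

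The only slightly delicate bookkeeping is in the second paragraph: one must check that multiplying the coefficients (merely $L^\infty$ globally, but analytic near $O$) by the higher-order tail $\sum_{k>N}P_k$ cannot create a homogeneous contribution of degree $N-2$, which is immediate once everything is expanded as a convergent series of homogeneous polynomials near $O$. The genuinely essential ingredient is the analyticity of $h$, equivalently the analyticity of $A$ and $b$ near $O$: it rules out the possibility that $h$ is nonzero yet flat at $O$, in which case no leading polynomial $P_N$ would exist and Lemma~\ref{lem:pre-cornscatter2018} could not be brought to bear. Apart from that, the proof is a short reduction to Lemma~\ref{lem:pre-cornscatter2018}.
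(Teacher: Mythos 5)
Your proof is correct and follows essentially the same route as the paper: both arguments use the analyticity of $A,b$ to expand $h$ at $O$, show that the lowest-order nonvanishing term must be harmonic and hence of the form $r^N(\mathsf A\cos N\theta+\mathsf B\sin N\theta)$, and then invoke Lemma \ref{lem:pre-cornscatter2018} to force $\mathsf A=\mathsf B=0$, a contradiction unless $h$ vanishes near $O$ (and then everywhere by unique continuation). The only difference is presentational: you obtain harmonicity of the leading homogeneous polynomial by comparing orders of vanishing in $\Delta h=-A\cdot\nabla h-bh$, whereas the paper expands $h=\sum_{j\ge M}r^jF_j(\theta)$ and matches trigonometric coefficients at the lowest order.
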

	\begin{proof}
	We first prove that
	the lowest order non-vanishing term in the Taylor expansion of $h$ at the corner point $O$ is harmonic, if $h$ does not vanish identically.
\rot{This extends the proof of \cite[Proposition 10]{ElHu2018acoustic}
in the special case  $a_1=a_2\equiv 0$ and $b=b_0\in \mathbb C$ to a more general setting.}

Since $A$ and $b$ are both analytic functions, by elliptic regularity theory $h$ is also analytic in $B_\epsilon(O)$.		
By  \cite[Lemma 2.1]{EHY2015}, there exists a positive integer $M\in \N_0$ such that $h$ can be expanded in the polar coordinates $(r,\theta)$ into the convergent series
		\[
		h(x) = \sum_{j \geq M} r^j F_j(\theta), \quad\text{where}\quad F_j(\theta) = \sum_{n+2m = j} (C_{n,m}^+ \cos n\theta + C_{n,m}^- \sin n\theta)
		\]
with $C_{n,m}^\pm \in \mathbb C$. Here, we have $C_{n,m}^\pm=0$ if $n+2m<M$, because $r^M F_M(\theta)$ is supposed to be the lowest order term.	In two dimensions,
		it is easy to  check that
				\begin{align}
		\Delta h
		& = (\frac {\partial^2} {\partial r^2} + \frac 1 r \frac {\partial} {\partial r} + \frac 1 {r^2} \frac {\partial^2} {\partial \theta^2}) h \nonumber\\
		& = \sum_{j \geq M} \left( j(j-1) r^{j-2} F_j + j r^{j-2} F_j + r^{j-2} F_j'' \right)\nonumber\\
		& = \sum_{j \geq M - 2} r^j[ (j+2)^2 F_{j+2} + F_{j+2}'']. \label{eq:lem2.2-1-cornscatter2018}
		\end{align}
		On the other hand, assuming
		\[
		A(x) = \sum_{j \geq 0} r^j A_j(\theta), \qquad b(x) = \sum_{j \geq 0} r^j b_j(\theta),
		\]
		we find
		\begin{equation} \label{eq:lem2.2-2-cornscatter2018}
		A(x) \cdot \nabla h(x) = \sum_{j \geq M - 1} r^j  \tilde{A}_j(\theta)
		\end{equation}
		and
		\begin{align}\nonumber
		b(x)\,h(x)
		 = \big( \sum_{j \geq 0} r^j b_j(\theta) \big) \big( \sum_{\ell \geq M} r^\ell F_\ell(\theta) \big)
		 = \sum_{j \geq M} r^j\; \widetilde b_j(\theta),
		 \end{align}
		  where
		  \be
		   \widetilde b_j(\theta) = \sum_{k + \ell \geq j; k \geq 0, \ell \geq M} b_k(\theta) F_\ell(\theta). \label{eq:lem2.2-3-cornscatter2018}
		\en
		Inserting \eqref{eq:lem2.2-1-cornscatter2018}-\eqref{eq:lem2.2-3-cornscatter2018} into the equation
		\[
		\Delta h(x) + A(x) \cdot \nabla h(x)+ b(x)\, h(x) = 0		\]
		and comparing the coefficients of $r^{M-1}$, we obtain $M^2 F_M + F_M'' = 0, ~ \theta \in [0,2\pi]$. By the definition of $F_j(\theta)$,  it follows that
		\begin{align}
		F_M''(\theta)
		& = (-n^2) \sum_{n + 2m = M} ( C_{n,m}^+ \cos n\theta + C_{n,m}^- \sin n\theta ) \nonumber\\
		& = -M^2\; ( C_{M,0}^+ \cos M\theta + C_{M,0}^- \sin M\theta )
	-n^2\!\!\! \sum_{n + 2m = M, m\neq 0} ( C_{n,m}^+ \cos n\theta + C_{n,m}^- \sin n\theta ) \label{eq:lem2.2-4-cornscatter2018}
		\end{align}
		and
		\begin{align}
		M^2 F_M(\theta)
		& = M^2 ( C_{M,0}^+ \cos M\theta + C_{M,0}^- \sin M\theta ) \nonumber\\
		& \quad + M^2 \sum_{n + 2m = M, m\neq 0} ( C_{n,m}^+ \cos n\theta + C_{n,m}^- \sin n\theta ). \label{eq:lem2.2-5-cornscatter2018}
		\end{align}
Combining the previous two identities we get
		\begin{equation} \label{eq:lem2.2-6-cornscatter2018}
		\sum_{\substack{n+2m=M\\m \neq 0}} ( C_{n,m}^+ \cos n\theta + C_{n,m}^- \sin n\theta ) (M^2 - n^2) = 0\qquad\mbox{for all}\quad \theta \in [0,2\pi].
		\end{equation}
	Since $M^2 - n^2 \neq 0$ for all $(n,m) \in \{ (n,m) \in \mathbb N^2 \,;\, n + 2m = M, m \neq 0 \}$, the relation \eqref{eq:lem2.2-6-cornscatter2018} implies
		\[
		C_{n,m}^\pm = 0 \quad \text{if} \quad n + 2m = M, m \neq 0.
		\]
		Hence,
		\[
		h(x) = r^M \big[ C_{M,0}^+ \cos M\theta + C_{M,0}^- \sin M\theta \big] + \sum_{j \geq M+1} r^j F_j(\theta),
		\]
that is, $h$ is of the form (\ref{eq:hform1-cornscatter2018}) near the corner point $O$.
Applying Lemma \ref{lem:pre-cornscatter2018} we obtain $C_{M,0}^\pm=0$.
This is a contradiction to the fact that $F_M(\theta)$ does not vanish. Hence, $h\equiv 0$ in $B_\epsilon(O)$.
\end{proof}

\section{Determination of convex-polygonal source support
	} \label{sec:contiSource-cornscatter2018}
	
	In this section, we first prove Theorem \ref{thm:Holdercontin-cornscatter2018} for $C^{1,\alpha}$-continuous source functions around a corner point  and then generalize the uniqueness result to a larger class of smooth source functions \rot{with a low contrast to the background medium.}
	
	\begin{proof}[Proof of Theorem \ref{thm:Holdercontin-cornscatter2018}]
		Let $\tilde u$ be the solution of $\Delta \tilde u + k^2 n \tilde u = \tilde f$ in $\R^2$ where $\tilde D := \supp \tilde{f}$ is also a convex polygon and $\tilde{f}\in C^{1, \alpha}(\overline{\tilde{D}\cap B_\epsilon(\tilde{O})})$ near each corner $\tilde{O}$ of $\partial \tilde D$. Suppose further that $|\tilde{f}|+|\nabla \tilde{f}|>0$ at each corner point of  $\partial \tilde{D}$.
		Assume that
		\begin{equation} \label{eq:uEqutilde-cornscatter2018}
			u^\infty(\hat x) = \tilde u^\infty(\hat x) \quad\text{for all}~\hat x \in \mathbb{S}.
		\end{equation}
		Applying Rellich's lemma and the unique continuation for Helmholtz equations, we see \begin{equation}\label{eq:trace}
			u=\tilde{u} \quad\mbox{in}\quad \R^2\backslash\overline{D\cup \tilde{D}}.
		\end{equation}
		Note that here we have used the assumption that the refractive index function $n(x)$ is given. 	
		
		The rest of the proof  of Theorem  \ref{thm:Holdercontin-cornscatter2018}  is divided into two steps.  In the first step we prove the unique determination of the source support, and in the second step the determination of \rot{the zeroth and first order derivatives of the source term} at corner points.

		\noindent {\bf Step 1}: Prove $D=\tilde{D}$.  If the geometric shapes of $D$ and $\tilde D$ are not identical, without loss of generality we may suppose there exists some corner $O\in \partial D$ and a neighborhood $B_\epsilon(O)$ of $O$ such that $B_\epsilon(O) \cap \tilde D = \emptyset$.
		Set $\Sigma_\epsilon := B_\epsilon(O) \cap D$ with the opening angle $\theta_0\in (0, \pi)$, and write $\Gamma_\epsilon:= \partial D \cap B_\epsilon(O)$.
		This scenario is illustrated in Figure \ref{fig:1-cornscatter2018}.
		
		
		\begin{figure}[htbp]
			\centering
			\includegraphics[width = 3.8in]{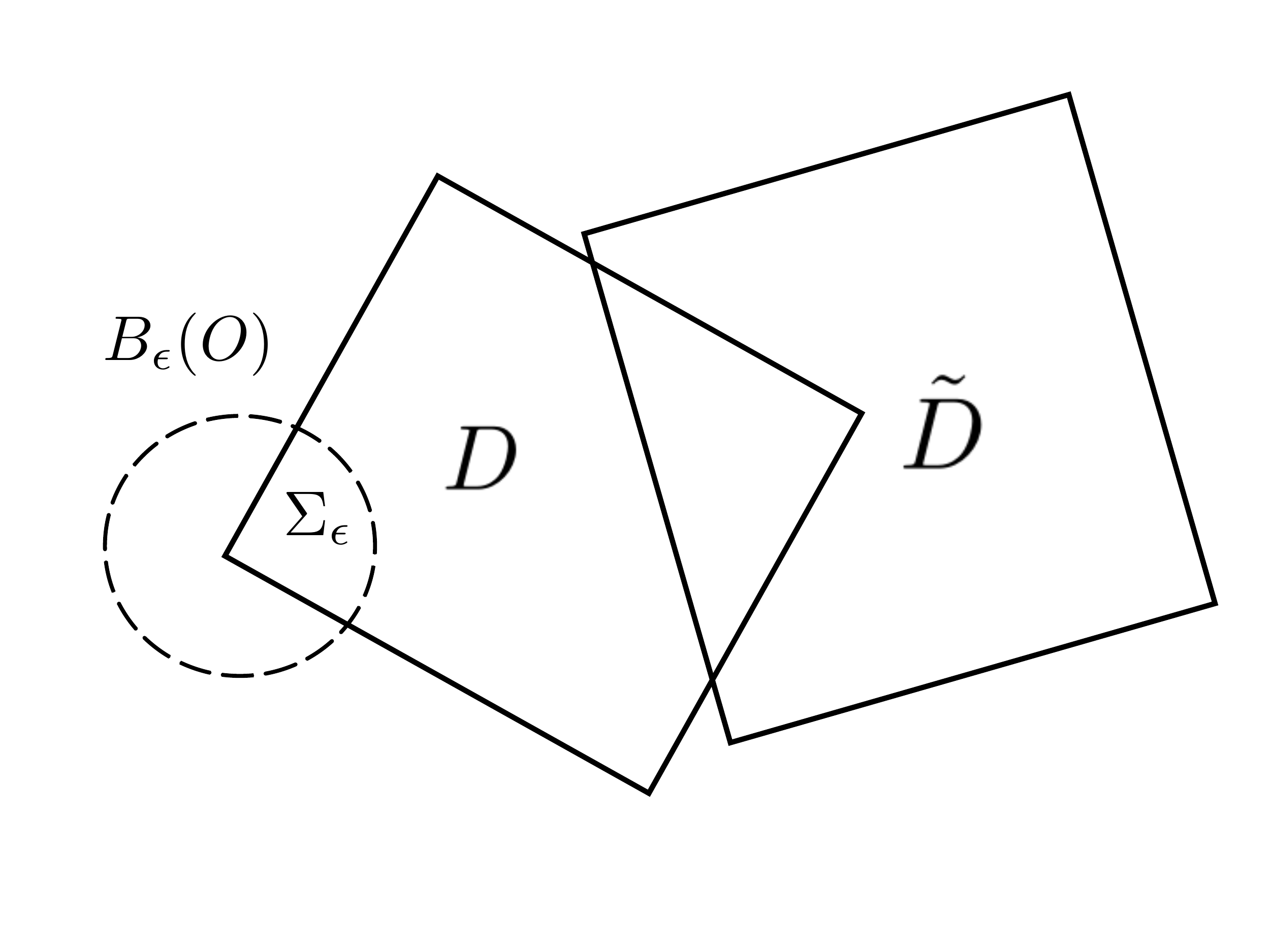}
			\caption{Illustration of two convex-polygonal source supports $D$ and $\tilde{D}$. }
			\label{fig:1-cornscatter2018}
		\end{figure}		
		By coordinate translation, we may suppose
		without loss of generality that the corner point $O$ is located at the origin. Then we have
		\begin{equation} \label{eq:uutilde-cornscatter2018}
			\begin{cases}
				\Delta u + k^2 n(x) u = f & \mbox{in } \Sigma_\epsilon, \\
				\Delta \tilde u + k^2 n(x) \tilde u = 0 & \mbox{in } \Sigma_\epsilon.
			\end{cases}
		\end{equation}
		Since $f \in L^2(B_\epsilon (O))\cap C^{1,\alpha}(\overline{D\cap \rot{B_\epsilon(O)}})$,  by standard elliptic regularity theory we have  $u, \tilde u \in H^2(B_\epsilon(O))$.
		Recall the transmission conditions for $u$:	
		\ben
		u^+=u^-,\qquad \partial_\nu u^+=\partial_\nu u^-\qquad\mbox{on}\quad \Gamma_\epsilon,
		\enn	
		where $(\cdot)^\pm$ denote the traces of $u\in H^2(B_\epsilon(O))$ taking from $D\,  (+)$ and $\R^2\backslash \overline{D} \, (-)$, respectively.
		We deduce from (\ref{eq:trace}) and the previous transmission conditions that
		\begin{equation} \label{eq:uutildeGamma-cornscatter2018}
			u=\tilde{u}, \quad \partial_\nu u=\partial_\nu \tilde{u}\qquad \mbox{on}\quad \Gamma_\epsilon.
		\end{equation}
		%
		Set $w = u - \tilde u$.
		From \eqref{eq:uutilde-cornscatter2018} and \eqref{eq:uutildeGamma-cornscatter2018} it follows that $w\in H^2(B_\epsilon(O))$ solves the Cauchy problem
		\begin{equation} \label{eq:wDNBC-cornscatter2018}
			\begin{cases}
				\Delta w + k^2 n(x) w = f_0 + f_1 & \mbox{in } \Sigma_\epsilon, \\
				w = \partial_\nu w = 0 & \mbox{on } \Gamma_\epsilon,
			\end{cases}
		\end{equation}
		where
		\[
		f_0(x) := \begin{cases}
		f(O), &\mbox{ when } f(O) \neq 0, \\
		r [(\partial_1 f)(O) \cos\theta + (\partial_2 f)(O) \sin\theta], &\mbox{ when } f(O) = 0,
		\end{cases}
		\]
		and
		\[
		f_1(x) := f(x) - f_0(x),\quad x\in \Sigma_\epsilon.
		\]
		The assumption $|f(O)| + |\nabla f(O)| > 0$
		gives $f_0 \not\equiv 0$.
		However, noting that $f\in C^{1, \alpha}(\overline{B_\epsilon(O)\cap D})$ and $f_0$ is harmonic, we deduce from  Lemma \ref{lem:pre-cornscatter2018} that $f_0 \equiv 0$. This contradiction implies $\partial D = \partial \tilde D$.

		{\bf Step 2}: Prove $f(O)=\tilde{f}(O)$, $\nabla f(O)=\nabla \tilde{f}(O)$ where $O$ is an arbitrary corner point of the source support.
		
		We still use the notations $\Sigma_\epsilon$ and $\Gamma_\epsilon$ defined in Step 1 with $D=\tilde{D}$.  Repeating the arguments in Step 1,  we obtain
		\begin{equation*}
			\begin{cases}
				\Delta u + k^2 n(x) u = f & \mbox{in } \Sigma_\epsilon, \\
				\Delta \tilde u + k^2 n(x) \tilde u = \tilde f & \mbox{in } \Sigma_\epsilon, \\
				u = \tilde u,\, \partial_\nu u = \partial_\nu \tilde u & \mbox{on } \Gamma_\epsilon.
			\end{cases}
		\end{equation*}
		This implies that $w := u - \tilde u \in H^2(B_\epsilon(O))$ is a weak solution of
		\begin{equation*}
			\begin{cases}
				\Delta w + k^2 n(x) w = f - \tilde f & \mbox{in } \Sigma_\epsilon, \\
				w = \partial_\nu w = 0 & \mbox{on } \Gamma_\epsilon.
			\end{cases}
		\end{equation*}
		Analogous to Step 1, we may define
		\[
		f_0(x) := \begin{cases}
		f(O) - \tilde f(O), &\mbox{ when } f(O) \neq \tilde f(O) \\
		r \{ [(\partial_1 f)(O) - (\partial_1 \tilde f)(O)] \cos\theta + [(\partial_2 f)(O) - (\partial_2 \tilde f)(O)] \sin\theta \}, &\mbox{ when } f(O) = \tilde f(O),
		\end{cases}
		\]
		and
		\[
		f_1(x) = f(x) - \tilde f(x) - f_0(x).
		\]
		Applying Lemma \ref{lem:pre-cornscatter2018} again, we obtain $f_0(x) \equiv 0$, which implies $f(O) =\tilde f(O)$ and $\nabla f(O) =\nabla \tilde f(O)$. Since the corner $O$ is taken arbitrarily, we finish the proof of Theorem \ref{thm:Holdercontin-cornscatter2018}.	\end{proof}
	If the smoothness of $f$ at the corner points can be weakened to be $f \in C^{0,\alpha}(\overline{D \cap B_\epsilon(O)})$, it follows from the proof of
	Theorem \ref{thm:Holdercontin-cornscatter2018} that the source support  $\partial D$ and $f(O)$  can be uniquely determined by $u^\infty(\hat{x})$ for all $\hat{x}\in\mathbb{S}$, if $f(O)\neq 0$. This proves Corollary \ref{cor1}.  Combining the arguments  in proving
	Theorem \ref{thm:Holdercontin-cornscatter2018} and Lemma \ref{lem:pre-cornscatter2018}, we immediately get the shape identification result of Corollary \ref{cor3} (i).
	
	Theorem \ref{thm:Holdercontin-cornscatter2018} can be generalized to more smooth source terms with a compact support, \rot{under an extra condition concerning the source discontinuity at corner points (see (\ref{eq:assumption1}) below)}. This leads to \rot{Corollary} \ref{prop:smooth-cornscatter2018}.
	\begin{cor} \label{prop:smooth-cornscatter2018}
		Suppose that $D := \supp (f) \subset \R^2$ is a convex polygon and $n(x)\equiv 1$ in $\R^2$.  Assume that there exists an $l\in \N_0$ such that
		$$f \in C^{l+1,\alpha}(\overline{D \cap B_\epsilon(O)}) \cap W^{l,\infty}(B_\epsilon(O)),\qquad 0 < \alpha <1$$
		for each corner $O$ of $\partial D$, and that there exist a multi-index $\beta=(\beta_1, \beta_2)$  ($\beta_j\in \N_0$) with $|\beta| := \beta_1+\beta_2=l$ such that
		\begin{equation}\label{eq:assumption1}
			| \partial^\beta f(O)|+|\partial^{\beta'} f(O)|> 0,\quad \beta' \geq \beta,\, |\beta' - \beta| = 1.
		\end{equation}
		Then $\partial D$, $\partial^\beta f(O)$ and $\partial^{\beta'} f(O)$ can be uniquely determined by $u^\infty(\hat{x})$ for all $\hat{x}\in\mathbb S$.
	\end{cor}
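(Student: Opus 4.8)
The plan is to follow the same two-step template as in the proof of Theorem \ref{thm:Holdercontin-cornscatter2018}, but replacing the zeroth/first order Taylor expansion near a corner with the Taylor expansion truncated at order $l$, and replacing the single application of Lemma \ref{lem:pre-cornscatter2018} with an iterated application that peels off the Taylor coefficients one order at a time. Suppose $\tilde u$ solves $\Delta\tilde u+k^2\tilde u=\tilde f$ with $\tilde D:=\supp\tilde f$ a convex polygon satisfying the same hypotheses, and that $u^\infty=\tilde u^\infty$ on $\mathbb S$. Exactly as before, Rellich's lemma and unique continuation give $u=\tilde u$ in $\R^2\setminus\overline{D\cup\tilde D}$, and by the $H^2$-regularity coming from $f\in L^2$ and $W^{l,\infty}$ (note $n\equiv 1$ here, so the coefficient is smooth) together with the transmission conditions, $w:=u-\tilde u$ satisfies a Cauchy problem $\Delta w+k^2 w=f-\tilde f$ in the corner sector $\Sigma_\epsilon$ with $w=\partial_\nu w=0$ on $\Gamma_\epsilon$.

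For Step 1 (disjointness is impossible, hence $\partial D=\partial\tilde D$), assume $B_\epsilon(O)\cap\tilde D=\emptyset$ for some corner $O$ of $\partial D$, placed at the origin; then $w$ solves the Cauchy problem with right-hand side $f$. Since $f\in C^{l+1,\alpha}$ near $O$ we have the Taylor expansion $f(x)=\sum_{|\gamma|\le l}\frac{1}{\gamma!}\partial^\gamma f(O)\,x^\gamma+\mathcal O(|x|^{l+1+\alpha})$. Each homogeneous polynomial $P_m(x):=\sum_{|\gamma|=m}\frac{1}{\gamma!}\partial^\gamma f(O)x^\gamma$, written in polar coordinates, is a finite sum $r^m(\text{trig polynomial in }\theta)$; crucially, re-expanding a monomial $x_1^{\gamma_1}x_2^{\gamma_2}$ in Fourier modes shows its top mode $e^{\pm i m\theta}$ comes with a nonzero coefficient, so the \emph{leading} angular frequency of $P_m$ is exactly $m$ whenever $P_m\not\equiv 0$. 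I would argue inductively: the lowest order $m_0\le l$ with $P_{m_0}\not\equiv 0$ exists by \eqref{eq:assumption1} (it satisfies $m_0\le l$ since $\partial^\beta f(O)$ or some $\partial^{\beta'}f(O)$ is nonzero, $|\beta|=l$, $|\beta'|=l+1$ — if all $\partial^\gamma f(O)=0$ for $|\gamma|\le l$ then necessarily some first-order-higher derivative is nonzero and one treats $m_0=l+1$ after the $\mathcal O$-term, but the cleanest route is to note the hypothesis forces the lowest non-vanishing Taylor order to be $\le l+1$). Write $f=P_{m_0}+(\text{higher order})$. The function $h:=f$ restricted to $\Sigma_\epsilon$ has lowest-order expansion $P_{m_0}$; I first Helmholtz-harmonically decompose $P_{m_0}$, or more simply apply Lemma \ref{lem:pre-cornscatter2018} not to $P_{m_0}$ directly but to the successive harmonic pieces: since $\Delta$ maps homogeneous polynomials of degree $m$ to degree $m-2$, one writes $P_{m_0}$ modulo lower-frequency/higher-divisibility corrections as $r^{m_0}(A\cos m_0\theta+B\sin m_0\theta)+\text{(lower angular modes)}$ and applies Lemma \ref{lem:pre-cornscatter2018} with $N=m_0$ to conclude $A=B=0$; then the next angular mode $m_0-2$ becomes leading, and one repeats. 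After finitely many steps all Fourier modes of $P_{m_0}$ vanish, so $P_{m_0}\equiv 0$, contradicting minimality. Hence $D=\tilde D$.

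For Step 2, with $D=\tilde D$ the right-hand side is $f-\tilde f$, whose Taylor expansion at $O$ has coefficients $\partial^\gamma f(O)-\partial^\gamma\tilde f(O)$; running the identical iterated-Lemma argument forces all these differences to vanish up to order $l+1$ — in particular $\partial^\beta f(O)=\partial^\beta\tilde f(O)$ and $\partial^{\beta'}f(O)=\partial^{\beta'}\tilde f(O)$ — and since $O$ is an arbitrary corner this completes the proof. \textbf{The main obstacle} is the bookkeeping in the induction of Step 1: one must verify that at each stage the remainder (lower-order Taylor terms of $f$ combined with the $\mathcal O(|x|^{l+1+\alpha})$ tail, the commutator $[\Delta,\chi]u$, and the term $k^2 w$) lands in the correct weighted Hölder class $\Lambda^{0,\alpha}_\beta(\Sigma)$ with a subscript that makes Lemma \ref{lem:pre-cornscatter2018} applicable, and that subtracting off a homogeneous polynomial of degree $m_0$ (which one can always do by solving $\Delta q=P_{m_0}$ with vanishing Cauchy data, cf. \eqref{eq:qDN2-cornscatter2018}) shifts the leading behaviour down by exactly two angular modes without introducing spurious singular terms — this is exactly the mechanism already established inside the proof of Lemma \ref{lem:pre-cornscatter2018}, so the work is to organize the iteration cleanly rather than to prove anything genuinely new. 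The hypothesis $n\equiv 1$ is used precisely so that $q=k^2 n$ is smooth (indeed constant), which keeps the products $qw$ in the right Hölder class under Lemma \ref{lem:product-cornscatter2018} throughout the bootstrap.
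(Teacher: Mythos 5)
Your overall two-source framework (Rellich's lemma, unique continuation, reduction to a Cauchy problem in the corner sector) matches the paper, but the core of your Step 1 has a genuine gap. Lemma \ref{lem:pre-cornscatter2018} requires the lowest-order expansion of the right-hand side to be \emph{harmonic}, i.e.\ exactly $r^N(A\cos N\theta+B\sin N\theta)+\mathcal O(r^{N+\alpha})$. The lowest nonvanishing Taylor term of $f$ at $O$ is a general homogeneous polynomial $P_{m_0}$ (with $m_0=l$ or $l+1$, since $W^{l,\infty}$-regularity across the corner kills all derivatives of order $\le l-1$), and for $m_0\ge 2$ this polynomial contains angular modes $m_0, m_0-2,\dots$ \emph{all at the same radial order} $r^{m_0}$. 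Hence $f-r^{m_0}(A\cos m_0\theta+B\sin m_0\theta)$ is \emph{not} $\mathcal O(r^{m_0+\alpha})$, hypothesis \eqref{eq:hform1-cornscatter2018} fails, and your proposed iteration (``the next angular mode $m_0-2$ becomes leading, and one repeats'') has no base to stand on: $r^{m_0}\cos((m_0-2)\theta)$ is not harmonic, so it can never appear as an admissible lowest-order term for a subsequent application of the lemma. Your claim that this is ``exactly the mechanism already established inside the proof of Lemma \ref{lem:pre-cornscatter2018}'' is not correct: the lemma's final step hinges on $\Delta^2 q_{N+2}=0$, which holds precisely because the leading term of $h$ is harmonic; for a general $P_{m_0}$ one gets $\Delta^2 q=\Delta P_{m_0}\neq 0$ and the biharmonic rigidity result of \cite[Proposition 12]{ElHu2018acoustic} no longer applies.

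The paper avoids this entirely by a device you did not use: since $n\equiv 1$, one may apply $\partial^\beta$ ($|\beta|=l$) to the equation. From $f\in H^l(B_\epsilon(O))$ one gets $u\in H^{l+2}(B_\epsilon(O))$ and, by the trace lemma, matching of $\partial_\nu^{j}u^{\pm}$ for $j=0,\dots,l+1$; hence $v:=\partial^\beta(u-\tilde u)$ solves $\Delta v+k^2v=\partial^\beta f$ in $\Sigma_\epsilon$ with $v=\partial_\nu v=0$ on $\Gamma_\epsilon$. Now the right-hand side $\partial^\beta f$ is $C^{1,\alpha}$, so its lowest-order expansion $\partial^\beta f(O)+\nabla\partial^\beta f(O)\cdot x$ is automatically harmonic, and a single application of Lemma \ref{lem:pre-cornscatter2018} (as in Corollary \ref{prop:pre-cornscatter2018}) yields $\partial^\beta f(O)=\partial^{\beta'}f(O)=0$, the desired contradiction with \eqref{eq:assumption1}. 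Your Step 2 then goes through with $f-\tilde f$ in place of $f$. To repair the write-up, replace the Taylor-peeling iteration by this differentiation of the PDE, and supply the $H^{l+2}$ regularity and higher-order trace matching that justify the vanishing Cauchy data of $\partial^\beta w$.
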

	\begin{rem} \label{rem3.1}
		(i)	By Sobolev embedding theorems, the condition $f\in W^{l,\infty}(B_\epsilon(O))	$ implies that $f\in C^{l-1, \alpha}(B_\epsilon(O))$. Hence, $\nabla^{j}f(O)=0$ for all $|j|\leq l-1$. \rot{
If $l = 0$, Corollary \ref{prop:smooth-cornscatter2018} is equivalent to the result of Theorem \ref{thm:Holdercontin-cornscatter2018}
when
$n(x)\equiv 1$. }
		(ii) The multi-index $\beta' $ in (\ref{eq:assumption1}) takes the form $\beta'=(\beta_1+1, \beta_2)$ or $\beta'=(\beta_1, \beta_2+1)$ \rot{in two dimensions}.
	\end{rem}
	
	\begin{proof}[Proof of \rot{Corollary} \ref{prop:smooth-cornscatter2018}]
		Obviously, we have $f \in H^l(B_\epsilon(O))$, and by the regularity of elliptic equations (see e.g., \cite{GT}) we get ${u \in H^{l+2}(B_\epsilon(O))}$. By the trace lemma,
		\[
		\partial_\nu^{j}u^+=\partial_\nu^j u^-,\qquad j=0,1,\cdots, l+1.
		\]
		Proceeding as in the proof of Theorem \ref{thm:Holdercontin-cornscatter2018}, we suppose there exist two sources $f$ and $\tilde f$ ($\tilde D :=\supp \tilde f$ is a convex polygon) which generate identical far-field patterns over all observation directions.	If $\partial D \neq \partial \tilde{D}$, we suppose there exists a corner point $O$ of $D$ and a neighborhood $B_\epsilon(O)$ of $O$ such that $B_\epsilon(O) \cap \tilde D = \emptyset$.
		Setting	 $\Sigma_\epsilon = D \cap B_\epsilon(O)$ and $\Gamma_\epsilon = \partial D\cap B_\epsilon(O) $.  In this section we define $v = \partial^{\beta} (u - \tilde u)$, where $\beta=(\beta_1, \beta_2)$, $|\beta|=l$, is the multi-index specified in Corollary \ref{prop:smooth-cornscatter2018}. Then we have
		\begin{equation} \label{eq:uutildeho-cornscatter2018}
			\begin{cases}
				\Delta v + k^2  v = \partial^\beta f & \mbox{in}\; \Sigma_\epsilon, \\
				v = \partial_\nu v = 0 & \mbox{on } \Gamma_\epsilon.
			\end{cases}
		\end{equation}
		Applying Lemma \ref{lem:pre-cornscatter2018} to \eqref{eq:uutildeho-cornscatter2018}, we conclude
		$ \partial^\beta f(O)=\partial^{\beta'}f(O)= 0$ for all $\beta'\geq \beta$, $|\beta'-\beta|=1$, which contradicts  our assumption \eqref{eq:assumption1}. Thus $D = \tilde D$. In the same manner, one can prove
		\[
		\partial^\beta (f-\tilde{f})(O)=\partial^{\beta'} (f-\tilde{f})(O)=0.
		\]


		The proof is complete.
	\end{proof}

	\section{Determination of source terms} \label{sec:harmoSource-cornscatter2018}
	
	In this section we prove Theorem \ref{thm:analytic-cornscatter2018}.
	\begin{proof}[Proof of Theorem \ref{thm:analytic-cornscatter2018}]
		Assume that the two convex polygons $D$ and $\tilde{D}:=\text{supp}(\tilde{f})$ produce identical far-field patterns. Here we suppose that $\tilde{f}=\tilde{v}|_{\overline{\tilde{D}}}$ for some $\tilde{v}\in S(A, b)$ which is analytic near corner points of $\tilde{D}$. Denote by $\tilde{u}$ the Sommerfeld radiation solution corresponding to $\tilde{D}$ and $\tilde{f}$.
		
		If $D\neq\tilde{D}$, we may choose \rot{at} least one corner point		
		$O\in\partial D$ as done in the proof of Theorem \ref{thm:Holdercontin-cornscatter2018}. Since $f=v|_{\overline{D}}$ for some $v\in S(A, b)$, the source function $f$ must be analytic on $\overline{B_\epsilon(O)\cap D}$.	 By \rot{the proof of} Corollary \ref{lem:2-cornscatter2018}, the lowest order term in the Taylor expansion of $f$ in $B_\epsilon(O)\cap D$ is harmonic.
		Arguing analogously to the proofs of Corollary \ref{lem:2-cornscatter2018} and
		Theorem \ref{thm:Holdercontin-cornscatter2018}, we may conclude from $u^\infty=\tilde{u}^\infty$ that
		$f$ vanishes identically on $\overline{B_\epsilon(O)\cap D}$.
		Hence $v\equiv 0$  on $\overline{B_\epsilon(O)\cap D}$, and
		by unique continuation we get $v\equiv 0$ on $\overline{D}$.
		This implies that  $f$ vanishes identically on $\overline{D}$ and thus $\text{supp}(f)=\emptyset$,
		which contradicts our assumption.  Hence, we obtain the uniqueness in determining the source support.
		
		To determine the source term, we set $w=u-\tilde{u}$ and consider the Cauchy problem (cf. \eqref{eq:wDNBC-cornscatter2018})
		\begin{equation*}
			\begin{cases}
				\Delta w + k^2 n(x) w =  f - \tilde f & \mbox{in } \Sigma_\epsilon, \\
				w = \partial_\nu w = 0 & \mbox{on } \Gamma_\epsilon,
			\end{cases}
		\end{equation*}
		where $\Sigma_\epsilon$ and $\Gamma_\epsilon$ are defined as the same ones in the proof of Theorem \ref{thm:Holdercontin-cornscatter2018}.
		Note that $f - \tilde f=(v-\tilde v)|_{\overline{D}}$ where $v-\tilde v\in S(A, b)$ is analytic in $B_\epsilon(O)$. By the proof of Lemma \ref{lem:2-cornscatter2018}, the function $f - \tilde f$ takes the form
		\[
		f(x) - \tilde f(x) = r^N(A \cos N \theta + B \sin N\theta) + \mathcal O(r^{N+\alpha}), \quad |x| \to 0,\, x \in \Sigma_\epsilon.
		\]
		for some $N\in \N_0$.	Recalling Lemma \ref{lem:pre-cornscatter2018} and  Corollary \ref{lem:2-cornscatter2018},	 we arrive at  $f=\tilde f$ and thus $v=\tilde v$ on $\overline{\Sigma}_\epsilon$. Since $A$ and $b$ are a priori given, the relation $f=\tilde{f}$ in $D$ follows from the unique continuation property of elliptic equations.
	\end{proof}

	\section{Characterization of radiating sources and singularity at corner points} \label{sec:radiatS-cornscatter2018}
	
	\begin{proof}[Proof of Corollary \ref{thm:NonRadiatingSource-cornscatter2018}]
		
		Assume $u^\infty \equiv 0$. Then we obtain $u \equiv 0$ in $\R^3 \backslash \overline D$ by Rellich's lemma, and in particular the traces of $u$ vanish on $\Gamma_\epsilon=\partial D\cap B_\epsilon(O)$ for some $\epsilon>0$.
		Under the condition (i) in Corollary \ref{thm:NonRadiatingSource-cornscatter2018},
		we can deduce from the proof of Corollary \ref{prop:smooth-cornscatter2018} that $v=\partial^\beta u$ satisfies	
		\begin{equation} \label{eq:utildeNonRa1-cornscatter2018}
			\begin{cases}
				\Delta v + k^2n_0 v = \partial^\beta f & \mbox{in } \;\Sigma_\epsilon=D\cap B_\epsilon(O), \\
				v = \partial_\nu v = 0 & \mbox{on}\; \Gamma_\epsilon.
			\end{cases}
		\end{equation} It then follows that
		$\partial^\beta f(O)=\partial^{\beta'}f(O)= 0$ for the indexes $\beta$ and  $\beta'$ specified in  Corollary \ref{prop:smooth-cornscatter2018} (i), which contradicts the condition \eqref{eq:assumption}.
		
		If $f$ fulfills the condition (ii), by the proof of Theorem \ref{thm:analytic-cornscatter2018}, $f$ must vanish identically, leading to a contradiction to the assumption that $D=\text{supp}(f) \neq \emptyset$.
	\end{proof}
	
	\begin{proof}[Proof of Corollary \ref{cor4}] Since $n(x)\equiv n_0$ in $B_R\backslash\overline{D}$, $u$ is analytic in $B_R\backslash\overline{D}$. Suppose on the contrary that $u$ can be analytically extended from $(B_R\backslash\overline{D})\cap B_\epsilon(O)$  to $D\cap B_\epsilon(O)$ for some $\epsilon>0$.  Denote by $w$ the extended solution in $B_\epsilon(O)$, which solves the Helmholtz equation $\Delta w+k^2n_0 w=0$ in $B_\epsilon(O)$ and coincides with $u$ in
		$(B_R\backslash\overline{D})\cap B_\epsilon(O)$. Setting $v=u-w$, we may arrive at  the same boundary value  problem (\ref{eq:utildeNonRa1-cornscatter2018}) with $l=0$ and then the same contradictions as in the proof of Corollary \ref{thm:NonRadiatingSource-cornscatter2018}.
	\end{proof}	
	
	\begin{proof}[Proof of Corollary \ref{cor3} (ii)] If the lowest order expansion of $f$ around the corner is harmonic, the results of
		Corollaries \ref{thm:NonRadiatingSource-cornscatter2018}  and \ref{cor4} can be proved analogously by
		applying Corollary \ref{lem:2-cornscatter2018}.
	\end{proof}
	
\section{A data-driven inversion scheme}\label{inversion}
The aim of this section is to propose a non-iterative numerical scheme to reconstruct the shape $\partial D$ of a convex-polygonal source support embedded in $B_R$ from a single far-field pattern. For simplicity we shall assume $n(x)\equiv n_0>0$ in $|x|<R$ and consider a slightly different model as follows
\begin{eqnarray}\label{Equation}\left\{\begin{array}{lll}
\Delta u+k^2u=0&&\mbox{in}\quad |x|>R,\\
\Delta u+k^2n_0 u=f&&\mbox{in}\quad |x|<R,\\
u^+=u^-,\; \partial_\nu u^+ =\lambda\, \partial_\nu^- u&&\mbox{on}\quad |x|=R,
\end{array}
\right.
\end{eqnarray}
where $u$ is required to fulfill the Sommerfeld radiation condition at infinity.
Here, the medium inside $B_R$ is supposed to be homogeneous and isotropic, and the parameter $\lambda>0$ ($\lambda\neq 1$) models the ratio of the constant densities in $|x|>R$ and $|x|<R$. 
 In this section, the source function $f$ is supposed to satisfy the two conditions in Theorem \ref{thm:Holdercontin-cornscatter2018}.
Although the refractive index $n(x)$ is discontinuous on $\partial B_R$, our corner scattering theory explored in previous sections are still valid for the new model (\ref{Equation}).
By Theorem \ref{thm:Holdercontin-cornscatter2018} and Corollary \ref{cor1}, the source support  $\partial D$ can be uniquely determined by the radiated far-field pattern $u^\infty(\hat{x})$ for all $\hat{x}\in\mathbb{S}$. Moreover, by the proof of Corollary \ref{cor4} we obtain 
\begin{prop}\label{P1}
The solution $u$ cannot be (analytically) extended from $B_R\backslash\overline{D}$ into a neighboring area of any corner point of $\partial D$ as a solution of the Helmholtz equation $\Delta u+k^2n_0 u=0$.
\end{prop}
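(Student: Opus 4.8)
The plan is to argue by contradiction, reproducing almost verbatim the argument used for Corollary~\ref{cor4} but now for the model \eqref{Equation}. Fix a corner $O\in\partial D$; since $D$ is a convex polygon its interior opening angle at $O$ lies in $(0,\pi)$, which is precisely the range admitted by Lemma~\ref{lem:pre-cornscatter2018}. After translating $O$ to the origin, observe that $n\equiv n_0$ throughout $B_R$, so $u$ solves the homogeneous equation $\Delta u+k^2n_0u=0$ in $B_R\setminus\overline D$ and is in particular real-analytic there. Suppose, contrary to the claim, that for some small $\epsilon>0$ the restriction $u|_{(B_R\setminus\overline D)\cap B_\epsilon(O)}$ can be analytically continued into a neighbourhood of $O$ \emph{as a solution of the Helmholtz equation}, i.e.\ there is $w$ with $\Delta w+k^2n_0w=0$ in $B_\epsilon(O)$ and $w=u$ on $(B_R\setminus\overline D)\cap B_\epsilon(O)$; by analytic hypoellipticity $w\in C^\infty(B_\epsilon(O))\subset H^2(B_\epsilon(O))$. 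Note that $\partial D$ lies strictly inside $B_R$, so the modified transmission condition imposed on $\partial B_R$ in \eqref{Equation} is irrelevant here.

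Next I would set $v:=u-w$ on the finite sector $\Sigma_\epsilon:=D\cap B_\epsilon(O)$, with $\Gamma_\epsilon:=\partial D\cap B_\epsilon(O)$. Since $f\in L^2$ near $O$, interior elliptic regularity gives $u\in H^2(B_\epsilon(O))$, hence $v\in H^2(\Sigma_\epsilon)$. Because $u\in H^2$ on a full neighbourhood of $\Gamma_\epsilon$, the traces of $u$ and $\partial_\nu u$ on $\Gamma_\epsilon$ are single-valued, and as $w$ agrees with $u$ (and $\partial_\nu w$ with $\partial_\nu u$) on the exterior side, the Cauchy data of $v$ vanish on $\Gamma_\epsilon$. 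Subtracting $\Delta w+k^2n_0w=0$ from $\Delta u+k^2n_0u=f$ in $\Sigma_\epsilon$ then yields the Cauchy problem
\begin{equation*}
\Delta v+k^2n_0\,v=f\quad\text{in }\Sigma_\epsilon,\qquad v=\partial_\nu v=0\quad\text{on }\Gamma_\epsilon,
\end{equation*}
which is exactly the problem (BVP) of Lemma~\ref{lem:pre-cornscatter2018} with $q\equiv k^2n_0\in C^{0,\alpha}$ and $h=f$.

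Finally, since in this section $f$ is assumed to satisfy the two hypotheses of Theorem~\ref{thm:Holdercontin-cornscatter2018}, in particular $f\in C^{1,\alpha}(\overline{D\cap B_\epsilon(O)})$, Corollary~\ref{prop:pre-cornscatter2018} applies and forces $f(O)=0$ and $\nabla f(O)=0$, contradicting $|f(O)|+|\nabla f(O)|>0$; this proves the proposition. (If one only assumes $f\in C^{0,\alpha}$ with $f(O)\neq0$, the $N=0$ case of Lemma~\ref{lem:pre-cornscatter2018} already gives the contradiction, and more generally it suffices, via Corollary~\ref{lem:2-cornscatter2018} / Corollary~\ref{cor3}(ii), that the lowest order expansion of $f$ at $O$ be harmonic.) I expect the only steps requiring genuine care to be (a) justifying that the analytic extension $w$ actually solves the Helmholtz equation on the \emph{whole} ball $B_\epsilon(O)$ — needed both for $w\in H^2$ and for the vanishing of the Cauchy data of $v$ on $\Gamma_\epsilon$ — and (b) checking that $\partial D$ carries the ordinary transmission conditions $u^+=u^-,\ \partial_\nu u^+=\partial_\nu u^-$, which is immediate once one records that the coefficient $n$ is continuous (indeed constant) across $\partial D$ and that $u\in H^2_{\mathrm{loc}}$. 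Everything else is a direct citation of Lemma~\ref{lem:pre-cornscatter2018} and Corollary~\ref{prop:pre-cornscatter2018}.
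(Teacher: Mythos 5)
Your proof is correct and follows essentially the same route as the paper: the paper also argues by contradiction, denotes by $w$ the hypothetical Helmholtz extension into $B_\epsilon(O)$, sets $v=u-w$ to obtain the Cauchy problem $\Delta v+k^2n_0v=f$ in $\Sigma_\epsilon$ with $v=\partial_\nu v=0$ on $\Gamma_\epsilon$, and invokes Lemma~\ref{lem:pre-cornscatter2018} (via Corollary~\ref{prop:pre-cornscatter2018}) to force $f(O)=\nabla f(O)=0$, contradicting the standing assumption from Theorem~\ref{thm:Holdercontin-cornscatter2018}. Your extra remarks on the $H^2$ regularity of $w$, the single-valuedness of the Cauchy data across $\Gamma_\epsilon$, and the irrelevance of the transmission condition on $\partial B_R$ are consistent with, and slightly more explicit than, the paper's presentation.
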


To determine the position and shape of $\partial D$, we introduce a test domain $\Omega$ which represents an acoustically sound-soft obstacle inside $B_R$. Here $\Omega \subset B_R$ is called a test domain if it is a connected convex domain and $k^2n_0$ is not the Dirichlet eigenvalue of $-\Delta$ over $\Omega$.
Consider the time-harmonic scattering of a plane wave from the obstacle $\Omega$ modeled by
\ben
\Delta v+k^2 v=0\quad&&\mbox{in}\quad \R^2\backslash\overline{B}_R, \quad v=e^{ikx\cdot d}+v^{sc},\\
\Delta v+k^2n_0 v=0\quad&&\mbox{in}\quad B_R\backslash \overline{\Omega},\\
v=0\quad&&\mbox{on}\quad \partial \Omega, \\
v^+=v^-,\; \partial_\nu v^+ =\lambda\, \partial_\nu^- v&&\mbox{on}\quad \partial B_R,\enn
where $v^{sc}$ is a Sommerfeld radiating solution in $|x|>R$. Note that $d\in \mathbb{S}$ denotes the incident direction of the plane wave $v^{in}(x)=e^{ikx\cdot d} $ in $\R^2$. The far-field patterns $v^{\infty}(\hat{x},d)$ of $v^{sc}$ for all incident directions define the far-field operator $F^{(\Omega)}: L^2(\mathbb{S})\rightarrow L^2(\mathbb{S})$:
\ben
(F^{(\Omega)}g)(\hat{x})=\int_{\mathbb{S}} v^\infty(\hat{x},d) g(d)\,ds(d),\quad \hat{x}\in \mathbb{S}.
\enn
Analogously, denote by $F_0: L^2(\mathbb{S})\rightarrow L^2(\mathbb{S})$
the far-field operator corresponding to the penetrable scatterer $B_R$ without the embedded sound-soft obstacle $\Omega$, and by
$\mathcal{S}_0$ the scattering operator corresponding to $F_0$ defined by
\ben
\mathcal{S}_0:=I+2ik\gamma F_0,\quad \gamma:=\frac{e^{i\pi/4}}{\sqrt{8k\pi}}.
\enn
It was proved in \cite{BKL} via Factorization Method that the spectrum system $(\lambda_j^{(\Omega)}, \psi_j^{(\Omega)})$ of the operator
\ben
F^{(\Omega)}_\#:=|\mbox{Re} ((F_0-F^{(\Omega)})\mathcal{S}_0)|+|\mbox{Im} ((F_0-F^{(\Omega)})\mathcal{S}_0)|
\enn
can be used to characterize the embedded obstacle $\Omega$. Note that the spectra are uniquely determined by $\Omega, B_R, k, n_0$ and $\lambda$, all of which are known. To proceed, we
recall that the data-to-pattern operator $G^{(\Omega)}: H^{1/2}(\partial \Omega)\rightarrow L^2(\mathbb{S})$ is defined as $G^{(\Omega)}(h):=v^{\infty}$, where $v^{\infty}$ is the far-field pattern of the unique radiating solution $v$ to the boundary value problem
\ben
&&\Delta v+k^2 v=0\quad\mbox{in}\quad \R^2\backslash\overline{B}_R, \qquad \Delta v+k^2n_0 v=0
\quad\mbox{in}\quad B_R\backslash \overline{\Omega},\\
&&v=h\quad\mbox{on}\quad \partial \Omega, \qquad\qquad\qquad\quad
v^+=v^-,\; \partial_\nu v^+ =\lambda\, \partial_\nu^- v\quad\mbox{on}\quad \partial B_R.\enn
Our imaging scheme is essentially based on the following lemma.
\begin{lem}\label{Lem6.1}
Let $u^\infty$ be the far-field pattern corresponding to the source term $f$ with the convex polygonal support $D\subset B_R$.   Then $u^\infty$ belongs to the range of the data-to-pattern operator $G^{(\Omega)}$ if and only if $D\subset \overline{\Omega}$.
\end{lem}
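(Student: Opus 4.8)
The plan is to prove Lemma \ref{Lem6.1} by treating the two implications separately: the implication ``$D\subset\overline\Omega\Rightarrow u^\infty\in\mathrm{Range}(G^{(\Omega)})$'' by directly exhibiting a boundary datum on $\partial\Omega$, and the converse by contraposition through the corner–scattering analysis already developed for Theorem \ref{thm:Holdercontin-cornscatter2018}. For the first implication, suppose $\mathrm{supp}(f)\subset\overline\Omega$. Then $f\equiv 0$ in the open set $B_R\setminus\overline\Omega$, so the radiated field $u$ satisfies $\Delta u+k^2n_0 u=0$ in $B_R\setminus\overline\Omega$, $\Delta u+k^2u=0$ in $\R^2\setminus\overline{B}_R$, the transmission conditions $u^+=u^-$, $\partial_\nu u^+=\lambda\,\partial_\nu^- u$ on $\partial B_R$, and the Sommerfeld radiation condition. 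I would then set $h:=u|_{\partial\Omega}\in H^{1/2}(\partial\Omega)$ (a legitimate trace since $u\in H^2_{loc}(\R^2)$), observe that $u|_{\R^2\setminus\overline\Omega}$ solves exactly the boundary value problem defining $G^{(\Omega)}(h)$, and invoke well-posedness of that problem to conclude that $u|_{\R^2\setminus\overline\Omega}$ is its unique radiating solution, hence $G^{(\Omega)}(h)=u^\infty$, i.e.\ $u^\infty\in\mathrm{Range}(G^{(\Omega)})$.

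For the converse, I would argue by contraposition: assume $D\not\subset\overline\Omega$. Since $D$ is a convex polygon and $\overline\Omega$ is convex, not all vertices of $D$ can lie in $\overline\Omega$ (otherwise $D$, being their convex hull, would sit in $\overline\Omega$); fix a vertex $O$ of $D$ with $O\notin\overline\Omega$, whose interior opening angle lies in $(0,\pi)$, and pick $\epsilon>0$ with $\overline{B}_\epsilon(O)\subset B_R\setminus\overline\Omega$. Suppose, for contradiction, that $u^\infty=G^{(\Omega)}(h)$ for some $h$, with associated radiating solution $v$. By Rellich's lemma $u=v$ in $\R^2\setminus\overline{B}_R$; matching the transmission conditions satisfied by $u$ and by $v$ across $\partial B_R$ then shows that $u$ and $v$ have the same Cauchy data on $\partial B_R$ from inside $B_R$, and since both solve $\Delta w+k^2n_0 w=0$ in $B_R\setminus(\overline D\cup\overline\Omega)$ — a connected set, by convexity of $D$ and $\Omega$ — unique continuation yields $u=v$ in $B_R\setminus(\overline D\cup\overline\Omega)$, in particular in $B_\epsilon(O)\setminus\overline D$. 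As $B_\epsilon(O)\cap\overline\Omega=\emptyset$, the function $v$ is a homogeneous solution throughout $B_\epsilon(O)$, so with $\Sigma_\epsilon:=D\cap B_\epsilon(O)$ and $\Gamma_\epsilon:=\partial D\cap B_\epsilon(O)$ the function $W:=u-v\in H^2(B_\epsilon(O))$ solves $\Delta W+k^2n_0 W=f$ in $\Sigma_\epsilon$ with $W=\partial_\nu W=0$ on $\Gamma_\epsilon$. Because $f$ satisfies the two hypotheses of Theorem \ref{thm:Holdercontin-cornscatter2018}, the argument in its proof — splitting off the lowest-order harmonic part $f_0$ of $f$ at $O$ and applying Lemma \ref{lem:pre-cornscatter2018} — forces $f(O)=0$ and $\nabla f(O)=0$, contradicting $|f(O)|+|\nabla f(O)|>0$. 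Hence $u^\infty\notin\mathrm{Range}(G^{(\Omega)})$, which is the desired contrapositive.

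The routine ingredients (the trace and elliptic-regularity facts, and the existence–uniqueness of the boundary value problem defining $G^{(\Omega)}$) I would quote as known. The genuinely delicate step is propagating the identity $u=v$ from the exterior $\R^2\setminus\overline{B}_R$, across $\partial B_R$ — where both the PDE and the conormal jump change — and then through $B_R$ to a neighbourhood of the vertex $O$: this needs the well-posedness of the auxiliary scattering problem (so that $v$ is uniquely pinned down by $h$), a unique-continuation argument compatible with the transmission conditions on $\partial B_R$, and the topological observation that $B_R\setminus(\overline D\cup\overline\Omega)$ is connected with $O$ isolated from $\overline\Omega$ inside $B_R$. Once $u=v$ near $O$ is secured, the remaining corner argument is verbatim the one already used in Theorem \ref{thm:Holdercontin-cornscatter2018} and Lemma \ref{lem:pre-cornscatter2018}.
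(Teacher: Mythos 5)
Your proof is correct and follows essentially the same route as the paper: the forward implication by taking $h=u|_{\partial\Omega}$, and the converse by locating a vertex of $D$ outside $\overline\Omega$ and using Rellich's lemma plus unique continuation to contradict the impossibility of extending $u$ across a corner (the paper simply cites its Proposition \ref{P1} here, whereas you unpack that corner argument inline). Your extra care about matching the transmission conditions on $\partial B_R$ and the connectedness of $B_R\setminus(\overline D\cup\overline\Omega)$ fills in details the paper's two-line proof elides, but it is the same argument.
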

\begin{proof}
Suppose $D\subset\overline{\Omega}\subset B_R$. By the definition of $G^{(\Omega)}$, we have $G^{(\Omega)}(h)=u^\infty$ with $h:=u|_{\partial \Omega}$, implying that $u^\infty$ is indeed in the range of $G^{(\Omega)}$. Now, suppose that $u^\infty=G^{(\Omega)}(h)$ for some $h\in H^{1/2}(\Omega)$ but $D\subset \overline{\Omega}$ does not hold.
Since $D$ is a convex polygon and $\Omega$ is a connected convex domain, one can always find a corner point of $\partial D$ which lies in the exterior of $\Omega$.
Using Rellich's lemma and the unique continuation for solutions of the Helmholtz equation,  $u$ can be analytically extended into a full neighborhood of this corner point. However, this is a contradiction to Proposition \ref{P1}, implying that $D\subset \overline{\Omega}$.
\end{proof}
By \cite{BKL}, the range of $G^{(\Omega)}$ coincides with the range of $(F_\#^{(\Omega)})^{1/2}$. Hence, $u^\infty$ belongs to the range of $(F_\#^{(\Omega)})^{1/2}$ if and only if $D\subset \overline{\Omega}$. This reveals an inclusion relation between the unknown target $D$ and 
the a priori given obstacle $\Omega$ through their measurement data $u^\infty$ and $F^{(\Omega)}$. 
Together with Picard's criterion, we derive a non-iterative inversion scheme stated as below.
\begin{thm}\label{Thm}
Introduce the domain-defined indicator
\be\label{Indicator}
W(\Omega):=\sum_{j=1}^\infty\frac{|<u^\infty, \psi^{(\Omega)}_j>_{L^2(\mathbb{S})}|^2}{|\lambda^{(\Omega)}_j|}.
\en
We have $W(\Omega)<\infty$ if and only if $D\subset \overline{\Omega}$. Hence, the source support $D$ can be regarded as the intersection of all test domains $\Omega$ such that $W(\Omega)<\infty$.
\end{thm}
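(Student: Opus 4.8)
The plan is to deduce Theorem \ref{Thm} from Lemma \ref{Lem6.1} together with the Factorization-Method identity $\mathrm{Range}((F_\#^{(\Omega)})^{1/2}) = \mathrm{Range}(G^{(\Omega)})$ established in \cite{BKL}, and Picard's criterion applied to the self-adjoint positive operator $F_\#^{(\Omega)}$. First I would recall that $F_\#^{(\Omega)}$ is a compact, self-adjoint, non-negative operator on $L^2(\mathbb{S})$, so it admits an eigensystem $(\lambda_j^{(\Omega)}, \psi_j^{(\Omega)})$ with $\lambda_j^{(\Omega)}>0$ and $\psi_j^{(\Omega)}$ forming an orthonormal basis of the closure of its range; hence $(F_\#^{(\Omega)})^{1/2}$ has the eigensystem $((\lambda_j^{(\Omega)})^{1/2}, \psi_j^{(\Omega)})$. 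By Picard's criterion, a given $g \in L^2(\mathbb{S})$ lies in $\mathrm{Range}((F_\#^{(\Omega)})^{1/2})$ if and only if $g \perp \ker(F_\#^{(\Omega)})$ and the series $\sum_j |\langle g, \psi_j^{(\Omega)}\rangle|^2 / (\lambda_j^{(\Omega)})^{1/2}\cdot(\lambda_j^{(\Omega)})^{-1/2} = \sum_j |\langle g, \psi_j^{(\Omega)}\rangle|^2 / \lambda_j^{(\Omega)}$ converges; this is exactly $W(\Omega) < \infty$ with $g = u^\infty$.

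Next I would chain the equivalences. Combining the displayed observation preceding the theorem — that $u^\infty$ belongs to $\mathrm{Range}((F_\#^{(\Omega)})^{1/2})$ if and only if $D \subset \overline{\Omega}$ (which itself follows from Lemma \ref{Lem6.1} and the range identity of \cite{BKL}) — with Picard's criterion gives: $W(\Omega) < \infty \iff u^\infty \in \mathrm{Range}((F_\#^{(\Omega)})^{1/2}) \iff D \subset \overline{\Omega}$. The only subtlety is the orthogonality condition $u^\infty \perp \ker(F_\#^{(\Omega)})$ that Picard's criterion also demands; I would handle it by noting that the data-to-pattern operator $G^{(\Omega)}$ has dense range in the orthogonal complement of $\ker(F_\#^{(\Omega)})$ (a standard fact proven alongside the factorization in \cite{BKL}), so if $u^\infty$ fails this orthogonality it cannot be in $\mathrm{Range}(G^{(\Omega)})$ and also $W(\Omega) = \infty$; thus the two-sided equivalence is unaffected. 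Alternatively, one observes that when $D\subset\overline\Omega$ the explicit representation $u^\infty = G^{(\Omega)}(u|_{\partial\Omega})$ automatically places $u^\infty$ in the (closed) range of $(F_\#^{(\Omega)})^{1/2}$, so the convergence of the series is immediate in that direction.

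Finally, for the last assertion I would argue that the source support $D$ equals $\bigcap_{\Omega:\,W(\Omega)<\infty} \overline{\Omega}$. The inclusion $D \subset \bigcap \overline{\Omega}$ is clear since every admissible $\Omega$ with $W(\Omega)<\infty$ satisfies $D\subset\overline{\Omega}$. For the reverse inclusion, since $D$ is a convex polygon contained in $B_R$ with $k^2 n_0$ not a Dirichlet eigenvalue of $-\Delta$ on any slightly enlarged convex neighbourhood (which can be arranged by an arbitrarily small perturbation, using that Dirichlet eigenvalues are isolated), $D$ itself — or a decreasing sequence of admissible test domains shrinking to $\overline D$ — appears in the intersection, forcing $\bigcap \overline{\Omega} \subset \overline{D}$; since $D$ is open this yields $D$ up to its boundary, which is the standard convention for "determining $\partial D$." The main obstacle I anticipate is this eigenvalue-avoidance technicality in the definition of test domain: one must verify that the family of admissible convex test domains is rich enough that their intersection (over those with finite indicator) is exactly $\overline{D}$ and not something strictly larger; this is routine but requires a short argument perturbing $\Omega$ to dodge the discrete Dirichlet spectrum while preserving convexity and the inclusion $D\subset\overline\Omega$.
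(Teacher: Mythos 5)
Your proposal is correct and follows essentially the same route as the paper: the chain Lemma \ref{Lem6.1} $\Rightarrow$ the range identity $\mathrm{Range}(G^{(\Omega)})=\mathrm{Range}((F_\#^{(\Omega)})^{1/2})$ from \cite{BKL} $\Rightarrow$ Picard's criterion is exactly the argument sketched in the two sentences preceding the theorem statement. The extra details you supply (the orthogonality to $\ker(F_\#^{(\Omega)})$ in Picard's criterion, and the perturbation argument ensuring enough admissible test domains so that the intersection is exactly $\overline{D}$) are points the paper leaves implicit, and they are handled correctly.
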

By Theorem \ref{Thm}, the source support $D$ can be reconstructed by firstly selecting different test domains $\Omega\subset B_R$ and then computing the indicator $W(\Omega)$ to get the inclusion relation between $D$ and $\Omega$. In particular, these test domains can be taken as sound-soft disks with different centers and radii. We refer to \cite{MH} for numerical examples of recovering convex polygonal sound-soft obstacles and source support in a homogeneous background medium.  If $\partial D$ contains no corners, only partial information of $D$ can be recovered; see \cite{LS} where the linear sampling method with a single far-field pattern was discussed. In the case of a variable inhomogeneous background medium, Theorem \ref{Thm} can be similarly justified by taking inspirations from the corresponding factorization schemes proved in \cite{GMMR, NPT}.

In Theorem \ref{Thm} we establish a domain-defined factorization method in an inhomogeneous background medium using only a single far-field pattern. A one-wave factorization method was discussed earlier in  \cite{EH2019} for inverse elastic scattering problems. The  pointwise-defined factorization scheme \cite{BKL} makes use of far-field data over all incident directions at a fixed energy. In comparison \cite{BKL}, we use in Lemma \ref{Lem6.1}  a sampling domain $\Omega\subset\R^2$ in place of a sampling point $z\in \R^2$. Correspondingly, the far-field data of the background Green's tensor $G_0(\cdot, z)$ (or its equivalence function used in \cite{BKL}) has been
substituted by the measurement $u^\infty$ in Lemma \ref{Lem6.1}, and the role of the singularity of $G_0$ was replaced by
the absence of analytical extension in a corner domain (see Proposition \ref{P1}) (cf. \cite{BKL} and Theorem \ref{Lem6.1}).
We refer to \cite{KPS} and \cite[Chapter 15]{NP2013} for other domain-defined sampling methods with a single measurement data.

One can interpret Theorem \ref{Thm} as
 a data-driven reconstruction scheme, because far-field patterns of a priori given obstacles (test domains) are involved in inversion process. Theorem \ref{Thm} explains how do these a priori data encode the information of our target. The test domains $\Omega$ can also be replaced by sound-hard, impedance and penetrable scatterers embedded in $B_R$.


	\section*{Acknowledgements} \label{sec:acknow-cornscatter2018}
	The work of J.~Li was partially supported by the NSF of China under the grant No.~11571161 and 11731006, the Shenzhen Sci-Tech Fund No.~JCYJ20170818153840322.

\end{document}